\newtheorem{theo}{Theorem}
\newtheorem{lemm}[theo]{Lemma}
\newtheorem{defn}{Definition}
\newenvironment{proof}{\medskip {\noindent\bf Proof:}}{$\parallel$ \medskip}
\newenvironment{rema}{{\noindent\bf Remark:}}{$\parallel$ \smallskip}
\begin{document}

\title{Median Confidence Regions in a Nonparametric Model}

\author{Edsel A.\ Pe\~na\footnote{E.\ Pe\~na is Professor, Department of Statistics, University of South Carolina,
Columbia, SC 29208 USA. {\em E-Mail:} pena@stat.sc.edu}
\and Taeho Kim\footnote{T.\ Kim is a PhD Student, Department of Statistics, University of South Carolina, Columbia, SC 29208 USA. {\em E-Mail:}\ taeho@email.sc.edu}
}

\date{\today}

\maketitle

\begin{abstract}
The nonparametric measurement error model (NMEM) has
$X_i = \Delta + \epsilon_i, i=1,2,\ldots,n; \Delta\in\Re; \epsilon_i\ \mbox{IID}\ F(\cdot) \in \mathfrak{F}_{c,0},$
where $\mathfrak{F}_{c,0}$ is the class of all continuous distributions with median $0$, so $\Delta$ is the median parameter of $X$. This paper deals with the problem of constructing a confidence region (CR) for $\Delta$ under the NMEM. This problem arises in many settings, including inference about the median lifetime of a complex system arising in engineering, reliability, biomedical, and public health settings. Current methods of constructing CRs for $\Delta$ are discussed, including the $T$-statistic based CR and the Wilcoxon signed-rank statistic based CR, arguably the two default methods in applied work when a confidence interval about the center of a distribution is desired. Optimal equivariant CRs are developed with focus on subclasses of $\mathfrak{F}_{c,0}$ when looking at their expected Lebesgue content. Applications to a real car mileage efficiency data set and Proschan's air-conditioning data set are demonstrated. Simulation studies to compare the performances of the different CR methods were undertaken.  Results of these studies indicate that the sign-statistic based CR and the optimal CR focused on symmetric distributions satisfy the confidence level requirement, though they tended to have higher contents; while two of the bootstrap-based CR procedures and one of the developed adaptive CR tended to be a tad more liberal but with smaller contents. A critical recommendation is that, {\em under the NMEM}, both the $T$-statistic based and Wilcoxon signed-rank statistic based confidence regions should {\bf not} be used since they have degraded confidence levels and/or inflated contents.

\smallskip

\noindent
{\em Key Words and Phrases:} Bootstrap confidence region; BCa confidence region; Confidence region; Content of a confidence region; Equivariant confidence region; Expected values of differences of order statistics; Invariant models; nonparametric confidence region; nonparametric measurement error model. 

\smallskip

\noindent
{\em 2010 AMS Subject Classification:} Primary: 62G15;  Secondary: 62G09, 62G35.

\end{abstract}

\section{Introduction and Motivation}
\label{sect-intro}

Given a univariate distribution function $G$, the two most common measures of central tendency are the mean $\mu = \int x G(dx)$, provided it exists ($\int |x| G(dx) < \infty$), and the median $\Delta = \inf\{x \in \Re:\ G(x) \ge 1/2\}$. The mean need not always exist, whereas the median always exists. 
Under symmetric distributions, and when the mean exists, then the mean and the median coincide. This paper is concerned with making statistical inferences about the median $\Delta$ of a distribution.
A popular model leading to the problem of making inference about the median of a distribution is the so-called measurement error model. In this model $\Delta$ represents a quantity of interest which is unknown, and when one measures its value, the observed value $x$ is a realization of the random variable
\begin{equation}
\label{measurement error model}
X = \Delta + \epsilon,
\end{equation}
where $\epsilon$ represents a measurement error with a continuous distribution $F(\epsilon)$ whose median equals zero. As such, the distribution of $X$ is $G(x) = F(x-\Delta)$. Typically, $F(\cdot)$ is assumed to be a zero-mean normal distribution, but this assumption is not tenable in many situations. For instance, in dealing with event times in biomedical, reliability, engineering, economic, and social settings, the error distribution need not even be symmetric. This is also the case when dealing with economic indicators such as per capita income, retirement savings, etc. As such, a general model is to simply assume that the error distribution $F$ belongs to the class of all continuous distributions with medians equal to zero. This class will be denoted by $\mathfrak{F}_{c,0}$. 

Another situation where this problem arises is when dealing with a complex engineering system, such as the motherboard of a laptop computer or some technologically-advanced car (e.g., a Tesla Model S sedan). Such a system will be composed of many different components configured according to some structure function, with the components having different failure-time distributions and some of them possibly acting dependently on each other. Of main interest for such a system will be its time-to-failure (also called lifetime) denoted by $X$. Because of the complexity of the system, it may not be feasible to analyze the distribution of $X$ by taking into account each of the failure time distributions of the components and the system's structure function which represents the configuration of the components to form the system. Thus a simplified and practically feasible viewpoint is to assume that the system's life distribution is some continuous distribution $G$. One may then be interested in the median $\Delta$ of this distribution $G$. 

Thus, in these situations, the observable random variable $X$ is assumed to have a distribution $F(x-\Delta)$ with $F(\cdot) \in \mathfrak{F}_{c,0}$ and $\Delta \in \Re$ being the median of $X$. This will be referred to as the one-population nonparametric measurement error model, abbreviated NMEM. This is the simplest among the measurement error models. The goal is to infer about the parameter of interest $\Delta$ with $F(\cdot)$ acting as an infinite-dimensional nuisance parameter. We shall be interested in this paper in the construction of a confidence region (CR) for $\Delta$ based on a random sample of observations of $X$. This definitely is a classic problem since the construction of a confidence interval for the median was even discussed in (\cite{Tho36}). More generally, quantiles instead of just the median may be of interest, and the methods developed here could be adaptable to inference about general quantiles.

Arguably, confidence regions for a parameter are preferable than point estimates since they address simultaneously the issue of how close to the truth (measured through the content of the region) and how sure about such closeness to the truth (measured by the confidence region coefficient). For more discussions on desirability of confidence regions see, for instance, the introduction in \cite{CicEfr96} and chapter 5 in \cite{DavHin97}.  Of course, one could typically accompany a point estimate (PE) by an estimate of its standard error (ESE), but then the user still needs to deduce closeness and sureness based on the PE and the ESE, usually a non-trivial matter if to be done {\em properly}.

We introduce some notations and definitions. Let $X_1, X_2, \ldots, X_n$ be independent and identically distributed (IID) random variables (a random sample) from $F(x-\Delta)$, where $\Delta \in \Re$ and $F \in \mathfrak{F}_{c,0}$. The mathematical problem is to construct a confidence region (CR) for the parameter $\theta(F,\Delta) = \Delta$ with $F$ an infinite-dimensional nuisance parameter. Denote by $\mathfrak{X}$ the range space of $\mathbf{X} = (X_1,\ldots,X_n)$ which will be endowed with a $\sigma$-field $\mathcal{X}$. We also denote by $\mathfrak{B}$ the Borel $\sigma$-field of $\Re$, and this will be endowed with the $\sigma$-field of subsets of $\mathfrak{B}$ consisting of its countable and co-countable subsets, with this $\sigma$-field denoted by $\mathcal{B}$.

\begin{defn}
\label{defn-CR}
Fix an $\alpha \in (0,1)$. Let $\mathbf{X} = (X_1,\ldots,X_n) \in \mathfrak{X}$ be IID from $F(x-\Delta)$. A measurable mapping $\Gamma: (\mathfrak{X},\mathcal{X}) \rightarrow (\mathfrak{B},\mathcal{B})$ is called a $100(1-\alpha)\%$ region estimator or confidence region (CR) for $\Delta$ if
$P_{(F(\cdot),\Delta)}\{\Delta \in \Gamma(\mathbf{X})\} \ge 1 - \alpha$
for every $(F(\cdot),\Delta) \in \mathfrak{F}_{c,0} \times \Re$.
\end{defn}

\begin{rema}
In later developments, we will allow the CR $\Gamma$ to also depend on a randomizer $U$, a standard uniform random variable independent of $\mathbf{X}$. This is to be able to achieve exactly the desired confidence level $1-\alpha$. In such a case, $\Gamma: \mathfrak{X} \times [0,1] \rightarrow \mathfrak{B}$ and $\Gamma(\mathbf{x},u)$ will be the realized CR when $\mathbf{X} = \mathbf{x}$ and $U = u$. However, even if we allow for randomized CRs, we will usually suppress writing the $U$ in $\Gamma(\mathbf{X},U)$ and simply write $\Gamma(\mathbf{X})$.
\end{rema}

Aside from satisfying the desired confidence coefficient in Definition \ref{defn-CR}, the quality of a CR depends on some measure of its content. Let $\nu(\cdot)$ be Lebesgue measure on $(\Re,\mathfrak{B})$. We will measure the content of a CR $\Gamma$ for $\Delta$ via
\begin{equation}
\label{content of CR}
\mathcal{C}[\Gamma;(F(\cdot),\Delta)] = E_{(F(\cdot),\Delta)} \nu[\Gamma(\mathbf{X})].
\end{equation}

In Definition \ref{good CRs} below we have the notion of uniformly best CRs. Our goal is to determine those CRs for $\Delta$ that possess such optimality properties.

\begin{defn}
\label{good CRs}
Let $\bar{\mathfrak{F}}_{c,0}$ be a subclass of $\mathfrak{F}_{c,0}$. A $100(1-\alpha)\%$ CR $\Gamma^*$ for $\Delta$ is a uniformly best CR for $\Delta$ under the subclass $\bar{\mathfrak{F}}_{c,0}$ if for any other  $100(1-\alpha)\%$ CR $\Gamma$,
$$\mathcal{C}[\Gamma^*;(F(\cdot),\Delta)] \le \mathcal{C}[\Gamma;(F(\cdot),\Delta)]$$
for all $(F(\cdot),\Delta) \in \bar{\mathfrak{F}}_{c,0} \times \Re$. If $\bar{\mathfrak{F}}_{c,0} = \mathfrak{F}_{c,0}$, then $\Gamma^*$ will be said to be the uniformly best CR for $\Delta$.
\end{defn}

The major contribution of this work is the development of $100(1-\alpha)\%$ randomized region estimators or confidence regions, possibly approximate, for the median $\Delta$ under the NMEM, of form
\begin{eqnarray*}
\Gamma(X,U) & = & \left[\bigcup_{\left[\{k \in \{0,1,\ldots,n\}:\ b(k;n,1/2) > c^* \hat{l}(k)\}\right] } \left[X_{(k)}, X_{(k+1)}\right)\right] \bigcup \\
&&\{U \le \gamma\}  \left[\bigcup_{\{k \in \{0,1,\ldots,n\}:\ b(k;n,1/2) = c^* \hat{l}(k)\}} \left[X_{(k)}, X_{(k+1)}\right) \right],
\end{eqnarray*}
where $b(k;n,1/2) = {n \choose k} 2^{-n}$ and $\hat{l}(k)$ is an appropriate estimator of $l(k;F) = E\{X_{(k+1)}\} - E\{X_{(k)}\}$. The randomizer $U$ is a uniform random variable, while $c^*$ is the infimum over all $c \in \Re$ satisfying $P\{b(B;n,1/2) > c \hat{l}(B)\} \le 1 - \alpha$ where $B$ is a binomial random variable with parameters $n$ and $1/2$. A specific form of $\hat{l}(k)$ that leads to a reasonable CR is given by
\begin{displaymath}
\hat{l}(k) = {n \choose k} \int_{-\infty}^{\infty} \hat{F}(w)^{k} [1 - \hat{F}(w)]^{n-k} dw
\end{displaymath}
where $\hat{F}(w) = \sum_{i=1}^n I\{X_i - \hat{\Delta} \le w\}/n$, the empirical distribution function of $X_i - \hat{\Delta}, i=1,2,\ldots,n$, with $\hat{\Delta}$ being the sample median. The specific CR above will be developed in section \ref{sect-adaptive methods}.  Prior to the development of the specific CRs, in section \ref{sect-Optimal CR} we utilize invariance ideas to derive the general form of the almost-optimal equivariant CR for $\Delta$ under the NMEM but still under the assumption that $F$ is known. Then, we address the question of how to deal with the fact that $F$ is not actually known, leading to the region estimator above. Two other region estimators which are focused toward the class of symmetric distributions and the class of negative exponential distributions, but still valid for the general NMEM model, will be developed in sections \ref{sect-symmetric distributions} and \ref{sect-exponential distributions}, respectively. In the simulation studies, the procedure focused on symmetric distributions actually performed quite robustly under varied distributions (even for the non-symmetric distributions) in terms of coverage probability and it had mean content superior to the procedure based on the sign statistic. Prior to studying the performance of these new region estimators, we briefly describe existing (`off-the-shelf') region estimators for the median in section \ref{sec-current methods}. We then proceed to demonstrate these different region estimators by applying to two data sets in section \ref{sec-illustration}. Section \ref{sec-Simulation Comparisons of Some of the Methods} will present the results of simulation studies comparing the performances of these region estimators under different underlying distributions by examining their mean contents and their achieved confidence levels. These comparisons are also of major importance since they demonstrate CR procedures that should be preferred and which CRs should not be used under the NMEM . Section \ref{sec-conclusion} will provide some concluding remarks.

\section{Development of Optimal CRs}
\label{sect-Optimal CR}

\subsection{Invariant Models and Equivariant CRs}
\label{subsec-invariance}

We first review the notions of invariant statistical models and equivariant CRs (see, for instance, \cite{LehRom05}). We do this review in a more general framework than the concrete NMEM which is the focus of this paper. We note that sufficiency and invariance were major ideas utilized by Peter Hooper in several of his papers dealing with confidence sets and prediction sets, cf., \cite{Hoo82a,Hoo82b,Hoo86}.

Let $X$ be an observable random element taking values in a sample space $\mathfrak{X}$. The class of probability models governing $X$ is $\mathfrak{P}$ which consists of probability measures $P$'s on the measurable space $(\mathfrak{X},\mathcal{F})$, with $\mathcal{F}$ a suitable $\sigma$-field of subsets of $\mathfrak{X}$. Let $\tau: \mathcal{P} \rightarrow \mathfrak{T}$ be a functional, with $\tau(P)$ being the parameter of interest. A $100(1-\alpha)\%$ confidence region for $\tau(P)$ is a set-valued mapping $\Gamma: \mathfrak{X} \rightarrow \mathcal{T}$, where $\mathcal{T}$ is a class of subsets of $\mathfrak{T}$ such that
\begin{equation}
\label{general defn of CR}
P\{\tau(P) \in \Gamma(\mathbf{X})\} \ge 1 - \alpha, \ \forall P \in \mathfrak{P}.
\end{equation}

Let $G = \{g: \mathfrak{X} \rightarrow \mathfrak{X}\}$ be a family of transformations on $\mathfrak{X}$ that forms a group under an operation $\cdot$ and with identity element $1_G \equiv 1$. Let $\bar{G} = \{\bar{g}: \mathfrak{P} \rightarrow \mathfrak{P}\}$ be a group of transformations on $\mathfrak{P}$ such that there exists a homomorphism $\bar{h}: G \rightarrow \bar{G}$ and let $1 \equiv 1_{\bar{G}} = \bar{h}(1_G)$ be the identity element in $\bar{G}$. The statistical model is said to be $(G,\bar{G})$-invariant if
\begin{equation}
\label{invariance of model}
P\{gX \in A\} = \bar{g}P\{X \in A\}, \forall g \in G; A \in \mathcal{F}.
\end{equation}
In addition, let $\tilde{G} = \{\tilde{g}: \mathfrak{T} \rightarrow \mathfrak{T}\}$ be a group of transformations on $\mathfrak{T}$ such that there exists a homomorphism $\tilde{h}: G \rightarrow \tilde{G}$. The parametric functional $\tau(P)$ is said to be $(\bar{G},\tilde{G})$-equivariant if $\tau(\bar{g}P) = \tilde{g}\tau(P)$ for all $g \in G, P \in \mathfrak{P}$. 
Employing a decision-theoretic framework, define a loss function on $\mathfrak{T} \times  \sigma(\mathfrak{T})$ given by the $0-1$ loss function
$$L(\tau,C) = 1  - I\{\tau \in C\}.$$
We shall say the the loss function is $\tilde{G}$-invariant if $L(\tilde{g}\tau,\tilde{g}C) =  L(\tau,C)$ for every $g \in G$, $\tau \in \mathfrak{T}$, and $C \in \sigma(\mathfrak{T})$.  Given a confidence region $\Gamma(X)$, its risk function is
\begin{displaymath}
R(P,\Gamma) \equiv E_P\{L(\tau(P),\Gamma(X))\}  =  1 - P\{\tau(P) \in \Gamma(X)\}.
\end{displaymath}
As such, the condition for a $100(1-\alpha)\%$ confidence region $\Gamma(X)$ is equivalent to having $R(P,\Gamma) = E_P\{L(\tau(P),\Gamma(X))\} \le \alpha$ for every $P \in \mathfrak{P}$.
When a $(G,\bar{G})$-invariant statistical model is coupled with a $\tilde{G}$-invariant loss function, then we would say that the statistical problem of constructing a confidence region $\Gamma(\cdot)$ is $(G,\bar{G},\tilde{G})$-invariant.
A confidence region $\Gamma(X)$ is then said to be $(G,\tilde{G})$-equivariant if for every $g \in G$ and $x \in \mathfrak{X}$, we have that
$$\Gamma(gx) = \tilde{g}\Gamma(x) \equiv \{\tilde{g}t: t \in \Gamma(x)\}.$$
The Principle of Invariance then dictates that we should only utilize $(G,\tilde{G})$-equivariant confidence regions.

For an invariant confidence region problem, if $\Gamma(\cdot)$ is equivariant, then we have that, for every $g \in G$,
\begin{eqnarray*}
P\{\tau(P) \in \Gamma(X)\}   & = &   E_P\{1- L(\tau(P),\Gamma(X))\} 
  =   E_P\{1- L(\tilde{g}\tau(P),\tilde{g}\Gamma(X))\}  \\
& = & E_P\{1-L(\tau(\bar{g}P),\Gamma(gX)\} 
  =  E_{\bar{g}P} \{1-L(\tau(\bar{g}P),\Gamma(X))\} \\
&  = &   (\bar{g}P)\{\tau(\bar{g}P) \in \Gamma(X)\}.
\end{eqnarray*}
Furthermore, if the group $\bar{G}$ is transitive over $\mathfrak{P}$, meaning that for any given $P_0 \in \mathfrak{P}$ we have $\{\bar{g}P_0: \bar{g} \in G\} = \mathfrak{P}$, then it suffices to consider an arbitrary element $P_0 \in \mathfrak{P}$ to determine $P\{\tau(P) \in \Gamma(X)\}$ for all $P \in \mathfrak{P}$ since this equals the value using the arbitrary  $P_0$.

Recall that we also need to measure the quality of a confidence region by measuring its content using the quantity
$\mathcal{C}(P,\Gamma) = E_P[\nu(\Gamma(X))],$
where $\nu(\cdot)$ is a measure on $(\mathfrak{T},\sigma(\mathfrak{T}))$, e.g., Lebesgue measure. We seek those confidence regions with small $\mathcal{C}(P,\Gamma)$. Observe that for an equivariant $\Gamma(\cdot)$ in an invariant statistical model, we have for every $g \in G$ that
\begin{eqnarray*}
\mathcal{C}(P,\Gamma) & = & E_P\left[\nu(\Gamma(X))\right] 
 =  E_P\left[\nu\left(\tilde{g}^{-1}\Gamma(gX)\right)\right] 
 =   E_{\bar{g}P}\left[\nu\left(\tilde{g}^{-1}\Gamma(X)\right)\right].
\end{eqnarray*}
If it so happens that $\nu[\tilde{g}^{-1}\Gamma(x)] = \xi(g) \nu[\Gamma(x)]$ for all $x \in \mathfrak{X}$ and $g \in G$ and for some $\xi: G \rightarrow \Re$, then there is the possibility of finding a $\Gamma(\cdot)$ that satisfies the required confidence level and minimizes the content. We shall call this condition as quasi-invariance of $\nu$ with respect to $(G,\tilde{G})$. However, if $(G,\tilde{G})$-quasi-invariance of $\nu$ does not hold, then a uniformly best confidence region may not exist. But, a uniformly best confidence region on a {\em subfamily} $\mathfrak{P}_0 \subset \mathfrak{P}$ may still exist among the class of $100(1-\alpha)\%$ confidence regions over $\mathfrak{P}$. In \cite{Hoo82a,Hoo82b} quasi-invariance of the measure $\nu$ was imposed, but in some settings this may be unnatural such as in the NMEM under consideration in the current paper.

\subsection{Towards Optimal CRs for the Median}
\label{subsec-optimal median CR}

Consider now the problem of constructing a CR for the median $\Delta$ under the NMEM: 
$X_i = \Delta + \epsilon_i, i = 1,\ldots,n; \ \epsilon_i \stackrel{IID}{\sim} F(\cdot) \in \mathfrak{F}_{c,0}, \Delta \in \Re.$
Prior to invoking invariance, we first reduce via the Sufficiency Principle. Thus, we may assume that the observable random vector is $X_{()} = (X_{(1)},X_{(2)},\ldots,X_{(n)})$, the vector of order statistics which is a complete sufficient statistic. The appropriate sample space is therefore $\mathfrak{X} = \{(v_1,v_2,\ldots,v_n): v_1 \le v_2 \le \ldots \le v_n\}$.  {\em A word on our notation:} even though we had reduced to $X_{()}$, in the sequel, when we write $P_F$ or $E_F$, this means that the common distribution of the original $X_i$'s is $F$.
For measuring the content of a region for $\Delta$ we use Lebesgue measure $\nu$ on $\Re$. 

The first invariance reduction is obtained through location-invariance. The problem is invariant with respect to translations with the groups of transformations being, for every $c \in \Re$,
$$x_{()} \mapsto x_{()}+c;\ (F,\Delta) \mapsto (F,\Delta+c);\ \mbox{and}\
\theta \mapsto \theta+c.$$
A CR $\Gamma(X_{()})$ is location-equivariant if, for every $c \in \Re$,
%
$\Gamma(x_{()}+c) = \Gamma(x_{()}) + c,$
%
where $x_{()} + c = (x_{(1)}+c,\ldots,x_{(n)}+c)$. Observe that for a location-equivariant $\Gamma(\cdot)$, we have for every $c \in \Re$ that
\begin{eqnarray*}
\lefteqn{P_{(F,\Delta)}\{\Delta \in \Gamma(X_{()})\}  =  P_{(F,\Delta)}\{\Delta \in \Gamma(X_{()}+c) - c\} } \\
 & = &  P_{(F,\Delta)}\{\Delta + c \in \Gamma(X_{()}+c)\} 
 =  P_{(F,\Delta+c)}\{\Delta + c \in \Gamma(X_{()})\} \\
 & = & P_{(F,0)}\{0 \in \Gamma(X_{()})\}
\end{eqnarray*}
by taking $c = -\Delta$ to obtain the last equality. The problem has thus been reduced to considering $X_{()}$ to be the order statistics from $F(\cdot)$ and we seek a location-equivariant $\Gamma(x_{()})$ such that, for every $F \in \mathfrak{F}_{c,0}$,
%
$P_F\{0 \in \Gamma(X_{()})\} \ge 1 - \alpha.$
%
In addition, we seek to minimize $E_F \nu[\Gamma(X_{()})]$ over all $F \in \mathfrak{F}_{c,0}$. Note that Lebesgue measure $\nu$ in $\Re$ is location-invariant, that is, $\nu(B) = \nu(B+c)$ for every $B \in \mathfrak{B}$ and $c \in \Re$.

We remark at this stage that if we {\em know} the distribution $F(\cdot)$, then we could determine the optimal CR for $\Delta$ {\em under} this known distribution and no further invariance reduction will be needed. To demonstrate, suppose that $F$ is the normal distribution with mean zero and variance $\sigma^2$ which could be taken to be $\sigma^2 = 1$, so $F(\cdot) = \Phi(\cdot)$, with $\Phi(\cdot)$ the standard normal distribution function (we also let $\phi(\cdot)$ to denote the standard normal density function). Then, we seek a location-equivariant $\Gamma^*(x_{()})$ satisfying
%
$P_\Phi\{0 \in \Gamma^*(X_{()})\} \ge 1 - \alpha$ and with
$E_\Phi \int_{\Re} I\{w \in \Gamma^*(X_{()})\} dw$ minimized.
%
Under $\Phi$, the joint density function of $X_{()}$ is given by
$f(x_{()}) = n! \prod_{i=1}^n \phi(x_{(i)}) I\{x_{()} \in \mathfrak{X}\}$.
Thus, we want
$P_\Phi\{0 \in \Gamma^*(X_{()})\} = \int I\{0 \in \Gamma^*(x_{()})\} f(x_{()}) dx_{()} \ge 1 - \alpha.$
On the other hand, we obtain
\begin{eqnarray*}
\lefteqn{E_\Phi \int_{\Re} I\{w \in \Gamma^*(X_{()})\} dw  
=  \int_{\mathfrak{X}} \int_\Re I\{w \in \Gamma^*(x_{()})\} f(x_{()}) dw dx_{()} } \\
& = & \int_{\mathfrak{X}} \int_\Re I\{0 \in \Gamma^*(x_{()}-w)\} f(x_{()}) dw dx_{()} 
 =  \int_{\mathfrak{X}} I\{0 \in \Gamma^*(x_{()})\} h(x_{()}) dx_{()}
\end{eqnarray*}
where
$$h(x_{()}) = n! \frac{(2\pi)^{-(n-1)/2}}{\sqrt{n}} \exp\left\{-\frac{1}{2} \sum_{i=1}^n (x_{(i)} - \bar{x})^2\right\}$$
obtained after the obvious change-of-variables. The problem is then to find a location-equivariant $\Gamma^*(x_{()})$ that will minimize $\int_{\mathfrak{X}} I\{0 \in \Gamma^*(x_{()})\} h(x_{()}) dx_{()}$ subject to the condition that $\int I\{0 \in \Gamma^*(x_{()})\} f(x_{()}) dx_{()} \ge 1 - \alpha.$ The solution to this constrained minimization problem (see the optimization result in Theorem \ref{optimality with lks known}) is the well-known $z$-confidence interval for the normal mean given by
$\Gamma^*(x_{()}) = [\bar{x} \pm z_{\alpha/2} (1/\sqrt{n})].$

However, since $F$ is known only to belong to $\mathfrak{F}_{c,0}$, a further invariance reduction is needed. This is achieved through strictly increasing continuous transformations with $0$ as a fixed point. Let $\mathcal{M}$ denote the collection of functions $m(\cdot)$ that are strictly increasing continuous function on $\Re$ with $m(0) = 0$. The groups of transformations are given by
$$x_{()} \mapsto (m(x_{(1)}),m(x_{(2)}),\ldots,m(x_{(n)}));\ 
F \mapsto F m^{-1};\ \mbox{and}\
\theta \mapsto m(\theta).$$
$\Gamma(x_{()})$ is then equivariant with respect to these groups of transformations if
\begin{eqnarray*}
& \Gamma(m(x_{(1)}),\ldots,m(x_{(n)}))  =  m\Gamma(x_{(1)},\ldots,x_{(n)}) 
 \equiv   \{m(w): w \in \Gamma(x_{()})\}, &
\end{eqnarray*}
so that for every $m \in \mathcal{M}$ and $x_{()} \in \mathfrak{X}$, we have
$\Gamma(x_{()}) = m^{-1} \Gamma(m(x_{()})).$
We then have that, for every $m \in \mathcal{M}$ and $F \in \mathfrak{F}_{c,0}$,
\begin{eqnarray*}
P_F\{0 \in \Gamma(X_{()})\} & = & P_F\{0 \in \Gamma(m^{-1}m(X_{()}))\} 
 =  P_F\{0 \in m^{-1} \Gamma(m(X_{()}))\} \\
& = & P_F\{0 \in \Gamma(m(X_{()}))\ \mbox{since $m(0) = 0$}  \\
 & = &  P_{Fm^{-1}} \{0 \in \Gamma(X_{()})\}.
\end{eqnarray*}
Observe, however, that
\begin{eqnarray*}
& E_F \nu[\Gamma(X_{()})]  =  E_F \nu[m^{-1} \Gamma(m(X_{()}))] 
 =  E_{Fm^{-1}} \nu[m^{-1} \Gamma(X_{()})] &
\end{eqnarray*}
and we do {\em not} have in this situation quasi-invariance of the measure $\nu$ with respect to the groups of monotone transformations.

The group of transformations $\mathcal{M}$ with $F \mapsto Fm^{-1}$ is transitive over $\mathfrak{F}_{c,0}$. Thus, we may simply pick an arbitrary $F_0 \in \mathfrak{F}_{c,0}$, which could be taken to be $F_0 = U[-1,1]$, the uniform distribution over $[-1,1]$. Indeed, if $X \sim F \in \mathfrak{F}_{c,0}$, then with $m_F(v) = 2F(v) - 1$, we have $m(X) \sim U[-1,1]$. Thus,
$$P_F\{0 \in \Gamma(X_{()})\} =  P_{F_0}\{0 \in \Gamma(X_{()})\} \ \mbox{and} \
E_F \nu[\Gamma(X_{()})] =  E_{F_0} \nu[m^{-1} \Gamma(X_{()})].$$
We emphasize again that in the second equation we could {\em not} drop the term $m^{-1}$ nor factor it out from inside the $\nu(\cdot)$ measure. This will prevent us from obtaining a uniformly (over $\mathfrak{F}_{c,0}$) best confidence region for $\Delta$.

Next, we obtain a representation of $\Gamma(x_{()})$ by choosing a
specific member of $\mathcal{M}$ that depends on $x_{()}$. For an $x_{()}$, define $m(x_{()})(w)$ for $w \in \{x_{(i)} - x_{(n)}, i=1,2,\ldots,n\}$  via
$$m(x_{()})(w) = \sum_{i=1}^n I\{x_{(i)} - x_{(n)} \le w\} - n,$$
and for $w \in \Re$ define it such that it is strictly increasing and continuous over all $w \in \Re$. Observe that for $j=1,2,\ldots,n$,
$$m(x_{()})(x_{(j)}-x_{(n)})= j - n \quad \mbox{and} \quad m(x_{()})(-x_{(n)}) = B(x_{()}) - n,$$
where $B(x_{()}) = \sum_{j=1}^n I\{x_{(j)} \le 0\} = \sum_{j=1}^n I\{x_j \le 0\}$. Note that $m(x_{()})(0) = n - n = 0$ and
observe that $m(x_{()})^{-1}(j - n) = x_{(j)} - x_{(n)}, j=1,2,\ldots,n.$

\begin{lemm}
\label{lemm-representation}
With $m(x_{()})(\cdot)$ defined as above, a location-equivariant (LE) and $\mathcal{M}$-equivariant (ME) $\Gamma(x_{()})$ has representation
\begin{equation}
\label{representation of LE and ME}
\Gamma(x_{()}) = 
m(x_{()})^{-1} [\Gamma_0 - n] + x_{(n)}
\end{equation}
where $\Gamma_0$ is some region in $\Re$. Thus, the LE and ME $\Gamma(\cdot)$'s are determined by $\Gamma_0$'s, which are subsets of $\Re$. In fact, given a $\Gamma_0 \subset \Re$, we have
\begin{equation}
\label{Gammax in terms of Gamma0}
\Gamma(x_{()}) = \bigcup_{k \in [\Gamma_0 \cap \{0,1,\ldots,n\}]} \left[x_{(k)},x_{(k+1)}\right)
\end{equation}
whose Lebesgue measure is
%
$\nu[\Gamma(x_{()})] = \sum_{k \in [\Gamma_0 \cap \{0,1,\ldots,n\}]} \left[x_{(k+1)} - x_{(k)}\right].$
%
\end{lemm}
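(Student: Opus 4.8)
The plan is to apply the two equivariances in sequence. Location-equivariance lets me peel off an additive copy of $x_{(n)}$ and reduce to "max-centered" configurations; $\mathcal{M}$-equivariance, exploited through the particular transformation $m(x_{()})(\cdot)$ already constructed in the excerpt, then transports the region at any such configuration to the region at the single canonical point $v_0 := (1-n,2-n,\ldots,0)$. The set $\Gamma_0$ in the lemma is then nothing but $\Gamma(v_0)$ relabeled by $+n$, so $\Gamma$ is completely encoded by $\Gamma_0$.

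In detail, put $y_{()} := x_{()} - x_{(n)}\mathbf{1}$, so that $y_{(n)} = 0$, and note that $m(x_{()})(\cdot)$ depends on $x_{()}$ only through $y_{()}$; write $m_x := m(x_{()}) = m(y_{()})$, which lies in $\mathcal{M}$. First, location-equivariance gives $\Gamma(x_{()}) = \Gamma(y_{()}) + x_{(n)}$. Second, applying $m_x$ componentwise to $y_{()}$ and using the defining property $m(x_{()})(x_{(j)} - x_{(n)}) = j - n$, we obtain $m_x(y_{()}) = (1-n,\ldots,0) = v_0$, a point independent of $x_{()}$ (I take $x_{(1)} < \cdots < x_{(n)}$ here; ties are $\nu$-null under continuous $F$ and merely produce empty intervals below). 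Third, $\mathcal{M}$-equivariance in the form $\Gamma(z_{()}) = m^{-1}\Gamma(m(z_{()}))$, applied with $z_{()} = y_{()}$ and $m = m_x$, yields $\Gamma(y_{()}) = m_x^{-1}\Gamma(m_x(y_{()})) = m_x^{-1}\Gamma(v_0)$, hence $\Gamma(x_{()}) = m(x_{()})^{-1}\Gamma(v_0) + x_{(n)}$; setting $\Gamma_0 := \Gamma(v_0) + n$ gives exactly \eqref{representation of LE and ME}.

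For the interval form and the content formula I would unwind membership: $w \in \Gamma(x_{()})$ iff $m(x_{()})(w - x_{(n)}) + n \in \Gamma_0$. The key observation is that $w \mapsto m(x_{()})(w - x_{(n)}) + n$ is a strictly increasing continuous bijection of $\Re$ that takes the value $j$ exactly at $w = x_{(j)}$ (again by $m(x_{()})(x_{(j)} - x_{(n)}) = j - n$), so it carries each interval $[x_{(k)}, x_{(k+1)})$ bijectively onto $[k, k+1)$ — with the conventions $x_{(0)} = -\infty$, $x_{(n+1)} = +\infty$ for the two tails — no matter which strictly increasing continuous rule is used to interpolate $m(x_{()})$ off the finite set $\{x_{(i)} - x_{(n)}\}$. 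Consequently $w$ lies in $\Gamma(x_{()})$ precisely when the index $k$ of the interval $[x_{(k)}, x_{(k+1)})$ containing it lies in $\Gamma_0 \cap \{0,1,\ldots,n\}$, which is \eqref{Gammax in terms of Gamma0}; and since the intervals $[x_{(k)}, x_{(k+1)})$ are pairwise disjoint and each has $\nu$-measure $x_{(k+1)} - x_{(k)}$, additivity of $\nu$ gives the stated content formula.

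The main difficulty is careful bookkeeping rather than a deep step. One must check that none of $\Gamma(x_{()})$, $\Gamma_0$, the coverage probability, or the content depends on the non-unique extension of $m(x_{()})$, which follows from the bijectivity-between-integer-endpoint-intervals observation above. One must also be a little careful about which $\Gamma_0$ can occur: $\mathcal{M}$-equivariance forces $\Gamma(v_0)$ to be invariant under every monotone self-map of $\Re$ fixing the points $\{1-n,\ldots,0\}$, so $\Gamma(v_0)$ is a union of the orbits of that subgroup — the fixed points $\{j-n\}$, which are $\nu$-null, together with the open intervals and rays they determine. Since the $\nu$-null pieces affect neither the content nor (because $X_{(j)}$ has a continuous distribution) the coverage, one may without loss take $\Gamma_0$ to be generated by a subset of $\{0,1,\ldots,n\}$, and then \eqref{Gammax in terms of Gamma0} and the content formula hold verbatim.
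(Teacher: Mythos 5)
Your proof is correct and its first half is the paper's argument almost verbatim: location-equivariance peels off $x_{(n)}$, $\mathcal{M}$-equivariance with the specific $m(x_{()})$ sends the centered data to the canonical point $(1-n,\ldots,0)$, and $\Gamma_0$ is the region there (the paper applies LE once more to write $\Gamma_0=\Gamma(1,\ldots,n)$ rather than defining it as $\Gamma(v_0)+n$, a cosmetic difference). Where you diverge is in deriving \eqref{Gammax in terms of Gamma0}: the paper reduces $\{w\in\Gamma(x_{()})\}$ to $\{0\in\Gamma(x_{()}-w)\}$ by LE and then reads off membership through the sign statistic $B(x_{()}-w)\in\Gamma_0$, whereas you analyze the increasing bijection $w\mapsto m(x_{()})(w-x_{(n)})+n$ directly and track which intervals $[x_{(k)},x_{(k+1)})$ it identifies with $[k,k+1)$. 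The two routes are equivalent, but yours buys two things the paper leaves implicit: a verification that nothing depends on the non-unique strictly increasing extension of $m(x_{()})$ off the data points, and the orbit decomposition showing that an equivariant $\Gamma(v_0)$ must be a union of the singletons $\{j-n\}$, the open intervals between them, and the two rays --- which is exactly what is needed to justify the paper's phrase ``given a $\Gamma_0\subset\Re$'' in \eqref{Gammax in terms of Gamma0}, since for a completely arbitrary subset of $\Re$ (e.g.\ $\Gamma_0=\{3/2\}$) that display would fail; only the modulo-null-set reduction you spell out makes it legitimate. The price is a slightly longer bookkeeping argument; the paper's $B$-statistic route is shorter and sets up the notation used immediately afterwards in the optimality theorem.
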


\begin{proof}
We utilize the location-equivariance (LE) and $\mathcal{M}$-equivariance (ME) of $\Gamma(\cdot)$. We have
\begin{eqnarray*}
\Gamma(x_{()}) & = & \Gamma(x_{()} - x_{(n)}) + x_{(n)} \ \mbox{(by LE)} \\
& = & m(x_{()})^{-1} \Gamma(m(x_{()})(x_{()} - x_{(n)})) + x_{(n)} \ \mbox{(by ME)} \\
& = & m(x_{()})^{-1}  \Gamma(1 - n, 2 - n, \ldots, (n-1) - n, n - n) + x_{(n)} \\
& = & m(x_{()})^{-1} [ \Gamma(1, 2, \ldots, n-1, n) - n] + x_{(n)}  \ \mbox{(again, by LE)} \\
& = & m(x_{()})^{-1} [ \Gamma_0 - n] + x_{(n)},
\end{eqnarray*}
where $\Gamma_0 = \Gamma(1,2,\ldots,n)$.
To establish (\ref{Gammax in terms of Gamma0}), given a $\Gamma_0 \subset \Re$, observe that
\begin{eqnarray*}
\{0 \in \Gamma(x_{()})\} & \Longleftrightarrow & \{ 0 \in m(x_{()})^{-1} [\Gamma_0-n] + x_{(n)} \} \\
 & \Longleftrightarrow  & \{ m(x_{()})(-x_{(n)}) \in \Gamma_0 - n\} \\
& \Longleftrightarrow & \{ (B(x_{()}) - n) \in (\Gamma_0 - n)\}  \\
& \Longleftrightarrow &  \{ B(x_{()}) \in \Gamma_0\}.
\end{eqnarray*}
It now follows that
\begin{eqnarray*}
\{w \in \Gamma(x_{()})\} & \Longleftrightarrow &  \{0 \in \Gamma(x_{()} - w)\} \ \mbox{[by LE property]} \\
& \Longleftrightarrow & \{B(x_{()} - w) \in \Gamma_0\} \ \mbox{[preceding result]} \\
& \Longleftrightarrow & \left\{ w \in \bigcup_{k \in \Gamma_0 \cap \{0,1,\ldots,n\}} \{v: B(x_{()}-v) = k\}\right\} \\
& \Longleftrightarrow & \left\{ w \in \bigcup_{k \in \Gamma_0 \cap \{0,1,\ldots,n\}} [x_{(k)},x_{(k+1)})\right\}.
\end{eqnarray*}
Thus, given a $\Gamma_0 \subset \Re$, $\Gamma(x_{()}) = \bigcup_{k \in \Gamma_0 \cap \{0,1,\ldots,n\}} [x_{(k)},x_{(k+1)})$ establishing (\ref{Gammax in terms of Gamma0}). The last result about the Lebesgue measure of $\Gamma(x_{()})$ is immediate since the intervals $\{[x_{(k)},x_{(k+1)}), k \in \Gamma_0 \cap \{0,1,\ldots,n\}\}$ are disjoint.
\end{proof}

\subsection{Optimal CRs}

Next, we tackle the problem of choosing an `optimal' (properly defined) region $\Gamma_0$, which then determines $\Gamma(x_{()})$ via the representation in Lemma \ref{lemm-representation}. Recall that the goal is to find $\Gamma(\cdot)$ such that with $F_0 = U[-1,1]$,
$P_{F_0}\{0 \in \Gamma(X_{()})\} \ge 1 - \alpha$
and, for every $F \in \mathfrak{F}_{c,0}$, $E_{F}\{\nu[\Gamma(X_{()})]\}$ minimized, or if this is not possible, made small. Note that under $F_0$, $B = B(X_{()}) = B(X) = \sum_{i=1}^n I\{X_i \le 0\}$ has a binomial distribution with parameters $(n,1/2)$, denoted by $\mathfrak{B}(\cdot;n,1/2)$, and with associated probability mass function 
$b(k;n,1/2) = {n \choose k} 2^{-n} I\{k \in \{0,1,\ldots,n\}\}.$ 
We have that
\begin{eqnarray*}
1 - \alpha \le  P_{F_0}\{0 \in \Gamma(X_{()})\} 
 =  P_{F_0}\{B \in \Gamma_0\} 
  =   \sum_{k=0}^n I\{k \in \Gamma_0\} b(k;n,1/2) 
 =  \sum_{k=0}^n \delta_0(k) b(k;n,1/2)
\end{eqnarray*}
with the first equality obtained using the portion of the proof of Lemma \ref{lemm-representation} and 
where we define 
$\delta_0(k) = I\{k \in \Gamma_0\}, k=0,1,\ldots,n.$
The expected Lebesgue measure of $\Gamma(X_{()})$ is
\begin{eqnarray*}
E_F \nu[\Gamma(X_{()})] & = & \sum_{k=0}^n I\{k \in \Gamma_0\} [E_F(X_{(k+1)}) - E_F(X_{(k)})] =
\sum_{k=0}^n \delta_0(k) l(k;F)
\end{eqnarray*}
where, with $X_{(0)} \equiv E_F[X_{(0)}] \equiv \inf\{v \in \Re: F(v) > 0\}$ and $X_{(n+1)} \equiv E_F[X_{(n+1)}] \equiv \sup\{v \in \Re: F(v) < 1\}$,
\begin{equation}
\label{defn lks}
l(k;F) = E_F(X_{(k+1)}) - E_F(X_{(k)}), k=0,1,\ldots,n.
\end{equation}
Assume {\em first} that we know the values of $\{l(k) \equiv l(k;F), k=0,1,\ldots,n\}$. We now allow for randomized confidence regions in order to achieve optimality, that is, we allow for $\Gamma_0$ and $\Gamma$ to depend on a randomizer $U$ which is a standard uniform random variable independent of the $X_i$'s. We remark that in \cite{Hoo82a,Hoo82b,Hoo86} randomized procedures were also allowed to enable achieving optimality, similarly to the Neyman-Pearson theory of most powerful tests (cf., \cite{LehRom05}). 

Define the right-continuous non-decreasing $[0,1]$-valued function, for $t \in \Re$,
\begin{eqnarray*}
G(t) & = & P\{b(B;n,1/2) > t l(B;F)\} 
 =  \sum_{k=0}^n I\{b(k;n,1/2) > t l(k;F)\} b(k;n,1/2).
\end{eqnarray*}
For a given $\alpha \in (0,1)$, define
$$c = \inf\{t: G(t) \le 1 - \alpha\} \quad \mbox{and} \quad
\gamma = \frac{(1-\alpha) - G(c)}{G(c-) - G(c)}.$$
Define the function $\delta_0^*$ over $\{0,1,\ldots,n\} \times [0,1]$ via
\begin{eqnarray*}
\delta_0^*((k,u))  & = & I\{b(k;n,1/2) > c l(k;F)\} + 
I\{b(k;n,1/2) = c l(k;F)\} I\{u \le \gamma\}.
\end{eqnarray*}
The optimal $\Gamma_0$ then satisfies
$\delta_0^*(k,u) = I\{k \in \Gamma_0^*(u)\}.$

\begin{theo}
\label{optimality with lks known}
Let $F \in \mathfrak{F}_{c,0}$ and let $\{l(k;F), k=0,1,\ldots,n\}$ be as defined in (\ref{defn lks}). Then
$E_{F} [\delta_0^*(B,U)] = 1 - \alpha.$
Furthermore, if $\delta_0$ is any other $\{0,1\}$-valued function in $\{0,1,\ldots,n\} \times [0,1]$ with
$E_{F} [\delta_0(B,U)] \ge 1 - \alpha,$
then
$$E \left\{\sum_{k=0}^n [\delta_0^*(k,U) l(k;F)]\right\} \le E\left\{\sum_{k=0}^n [\delta_0(k,U) l(k;F)]\right\},$$
where the expectation is with respect to the randomizer $U$.
\end{theo}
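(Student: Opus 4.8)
\medskip\noindent\textbf{Sketch of proof.}
The plan is to read Theorem~\ref{optimality with lks known} as a Neyman--Pearson problem. By Lemma~\ref{lemm-representation} and the computations preceding the theorem, any $\{0,1\}$-valued $\delta_0$ on $\{0,1,\ldots,n\}\times[0,1]$ has coverage $E_F[\delta_0(B,U)]=E_U\sum_{k=0}^n\delta_0(k,U)\,b(k;n,1/2)$ and expected content $E_U\sum_{k=0}^n\delta_0(k,U)\,l(k;F)$, so the task is to minimize the linear functional $\delta_0\mapsto E_U\sum_k\delta_0(k,U)\,l(k;F)$ over $\{0,1\}$-valued $\delta_0$ subject to $E_U\sum_k\delta_0(k,U)\,b(k;n,1/2)\ge 1-\alpha$. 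Here $b(\cdot\,;n,1/2)$ and $l(\cdot\,;F)$ play the roles of the two ``densities'' against counting measure on $\{0,1,\ldots,n\}$, and the claimed optimum $\delta_0^*$ is exactly the randomized likelihood-ratio test ``include $k$ when $b(k;n,1/2)>c\,l(k;F)$, randomize with probability $\gamma$ when $b(k;n,1/2)=c\,l(k;F)$''. Before proceeding I would record that $c$ and $\gamma$ are well defined: since $b(k;n,1/2)>0$ and $l(k;F)\ge 0$ one has $G(0)=1>1-\alpha$, while for continuous $F$ one has $l(k;F)>0$ for all $k$ so $G(t)\to 0$ as $t\to\infty$; as $G$ is right-continuous and non-increasing this forces $c\in(0,\infty)$, $G(c)\le 1-\alpha\le G(c-)$, and $\gamma\in[0,1]$ (with $\gamma:=0$ and $G(c)=1-\alpha$ in the degenerate case where the jump $G(c-)-G(c)$ vanishes).

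For the first assertion I would compute, taking expectation over $U$ first,
\[
E_F[\delta_0^*(B,U)] \;=\; \sum_{k=0}^n I\{b(k;n,1/2)>c\,l(k;F)\}\,b(k;n,1/2)\;+\;\gamma\sum_{k=0}^n I\{b(k;n,1/2)=c\,l(k;F)\}\,b(k;n,1/2),
\]
and then identify the first sum as $G(c)$ and the second as $G(c-)-G(c)$ (the mass lost by the defining sum for $G$ at the threshold $c$, since $\lim_{t\uparrow c}I\{b(k;n,1/2)>t\,l(k;F)\}=I\{b(k;n,1/2)\ge c\,l(k;F)\}$); substituting the value of $\gamma$ gives $E_F[\delta_0^*(B,U)]=G(c)+\gamma\,[G(c-)-G(c)]=1-\alpha$.

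For the optimality assertion, fix any $\{0,1\}$-valued $\delta_0$ with $E_F[\delta_0(B,U)]\ge 1-\alpha$. The crux is the pointwise inequality
\[
\bigl(\delta_0^*(k,u)-\delta_0(k,u)\bigr)\bigl(b(k;n,1/2)-c\,l(k;F)\bigr)\;\ge\;0\qquad\text{for all }(k,u),
\]
verified by cases (on $\{b>c\,l\}$ one has $\delta_0^*=1\ge\delta_0$; on $\{b<c\,l\}$ one has $\delta_0^*=0\le\delta_0$; on $\{b=c\,l\}$ the second factor is $0$). Summing over $k$, applying $E_U$, rearranging, and then invoking the coverage constraint on $\delta_0$ gives
\[
c\,E_U\sum_{k}\bigl(\delta_0^*(k,U)-\delta_0(k,U)\bigr)l(k;F)\;\le\;E_U\sum_{k}\bigl(\delta_0^*(k,U)-\delta_0(k,U)\bigr)b(k;n,1/2)\;=\;(1-\alpha)-E_F[\delta_0(B,U)]\;\le\;0,
\]
and dividing by $c>0$ yields $E_U\sum_k\delta_0^*(k,U)\,l(k;F)\le E_U\sum_k\delta_0(k,U)\,l(k;F)$, which is the second assertion.

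The step I expect to be most delicate is the handling of $l(0;F)$ and $l(n;F)$, which can be $+\infty$ when $F$ has unbounded support, so that the product ``$\delta_0\,l$'' and the pointwise inequality above need care. I would dispose of this at the outset by noting that $\delta_0^*(k,\cdot)\equiv 0$ on $\{k:\ l(k;F)=+\infty\}$, and that it suffices to consider competitors $\delta_0$ that also vanish there, since otherwise $E_U\sum_k\delta_0(k,U)\,l(k;F)=+\infty$ and the desired inequality is trivial; all displayed sums may then be restricted to the finitely many indices $k$ with $l(k;F)<\infty$, on which every manipulation above is finite and elementary. A final routine point is that $\delta_0^*$ is a genuine jointly measurable $\{0,1\}$-valued function, which is immediate from its explicit indicator form.
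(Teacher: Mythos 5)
Your proof is correct and follows essentially the same Neyman--Pearson-style argument as the paper: the same identification of $E_F[\delta_0^*(B,U)]$ with $G(c)+\gamma\,[G(c-)-G(c)]=1-\alpha$, and the same pointwise inequality $\bigl(\delta_0^*(k,u)-\delta_0(k,u)\bigr)\bigl(b(k;n,1/2)-c\,l(k;F)\bigr)\ge 0$ summed over $k$ and integrated over $u$. You are in fact a bit more careful than the paper on two side points it glosses over --- verifying $c>0$ (the paper only invokes $c\ge 0$, which would not suffice to divide) and handling the possibly infinite values $l(0;F)$, $l(n;F)$.
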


\begin{proof}
From the form of $\delta_0^*$, we have
\begin{eqnarray*}
E_F[\delta_0^*(B,U)]  & = & P\{b(B;n,1/2) > c l(B;F)\} + \gamma P\{b(B;n,1/2) = c l(B;F)\} \\
& = & G(c) + \left[\frac{(1-\alpha)-G(c)}{G(c-)-G(c)}\right] [G(c-) - G(c)] 
 =  1 - \alpha.
\end{eqnarray*}
Let $\delta_0$ be any other function on $\{0,1,\ldots,n\} \times [0,1]$ with
$E_F[\delta_0(B,U)]  \ge 1 - \alpha.$
From the definition of $\delta_0^*$, we observe that for each $(k,u) \in \{0,1,\ldots,n\} \times [0,1]$,
$$[b(k;n,1/2) - c l(k;F)][\delta_0^*(k,u) - \delta_0(k,u)] \ge 0.$$
Summing over $k = 0,1,\ldots,n,$ and integrating over $u \in [0,1]$, we find that
\begin{eqnarray*}
E_F\{\delta_0^*(B,U) - \delta_0(B,U)\} 
\ge c\left[\sum_{k=0}^n \int_0^1 l(k;F) \delta_0^*(k,u) du - \sum_{k=0}^n \int_0^1 l(k;F) \delta_0(k,u) du\right].
\end{eqnarray*}
Since $c \ge 0$ and by condition we have $E_F\{\delta_0^*(B,U) - \delta_0(B,U)\} \le 0$, then 
$$\sum_{k=0}^n \int_0^1 l(k;F) \delta_0^*(k,u) du \le \sum_{k=0}^n \int_0^1 l(k;F) \delta_0(k,u) du$$
which completes the proof of the theorem.
\end{proof}

\begin{rema}
 We note that this proof is similar to that of the Neyman-Pearson Lemma except for the fact that the $\{l(k;F): k=0,1,\ldots,n\}$ is not a distribution function.
 \end{rema}

Therefore, the optimal $\Gamma_0$, possibly using a randomizer $U \sim U[0,1]$, is 
\begin{eqnarray*}
\Gamma_0^*(u) & = & \left[\{k \in \{0,1,\ldots,n\}: b(k;n,1/2) > c l(k;F)\}\right]  \bigcup \\
&& \left[\{u \le \gamma\} \cap \{k \in \{0,1,\ldots,n\}: b(k;n,1/2) = c l(k;F)\}\right].
\end{eqnarray*}
The associated optimal confidence region for $\Delta$, possibly randomized, is
\begin{eqnarray}
\lefteqn{\Gamma^*(X_{()},U)  =  \left[\bigcup_{\{k \in \{0,1,\ldots,n\}:\  b(k;n,1/2) > c l(k;F)\}} [X_{(k)},X_{(k+1)})\right] \bigcup } \nonumber \\
&& \left[\{U \le \gamma\} \cap \left\{\bigcup_{\{k \in \{0,1,\ldots,n\}:\  b(k;n,1/2) = c l(k;F)\}} [X_{(k)},X_{(k+1)})\right\}\right].
\label{optimal CR known lks}
\end{eqnarray}
Note that if we drop the term in bracket containing $\{U\le \gamma\}$ in (\ref{optimal CR known lks}), we obtain
\begin{equation}
\label{conservative CR for median problem}
\Gamma^*(X_{()}) = \bigcup_{\{k \in \{0,1,\ldots,n\}:\  b(k;n,1/2) \ge c l(k;F)\}} [X_{(k)},X_{(k+1)}),
\end{equation}
which is a conservative confidence region for $\Delta$ in the sense that $P_{(F,\Delta)}\{\Delta \in \Gamma^*(X_{()})\} \ge 1 - \alpha.$ If $100(1-\alpha)\%$ is a {\em natural} confidence coefficient (cf., \cite{RanWol79}) associated with the binomial distribution, then we may still obtain an exact confidence region. 

\subsection{Implementation Aspects}


The optimal confidence region required knowledge of the $l(k;F)$'s, or at the very least the ordering of the $k \in \{0,1,\ldots,n\}$ for inclusion in the $\Gamma_0^*$, which is determined by the magnitude of the ratios $r(k;F) \equiv b(k;n,1/2)/l(k;F), k=0,1,\ldots,n$. In general, $l(k;F)$ will depend on the unknown true distribution $F$, hence the values of $l(k;F)$'s or the $r(k;F)$'s will not be known. In this case, we will not be able to determine $\Gamma^*(\cdot)$. We describe two approaches to circumvent this problem.
\begin{itemize}
\item[(i)] Restrict $F$ to belong to a subclass of the family of continuous distributions, say $\bar{\mathfrak{F}}_{c,0} \subset \mathfrak{F}_{c,0}$ and determine the $l(k;F)$'s or the $r(k;F)$'s for this class. This is then tantamount to satisfying the confidence level condition over the {\em whole} of $\mathfrak{F}_{c,0}$, but {\em focusing} only on the subclass $\bar{\mathfrak{F}}_{c,0}$ for minimizing the expected Lebesgue measure of the confidence region.
\item[(ii)] Utilize the observed data to estimate $l(k;F)$ by $\hat{l}(k)$, then use these $\hat{l}(k)$'s in the expression of $\Gamma^*(X_{()},U)$. There are several ways to accomplish this which are discussed below. However, it should be pointed out that when estimates are plugged-in, data double-dipping ensues and the achieved confidence level may not anymore satisfy the condition of being at least equal to $1-\alpha$.
\end{itemize}
The next two sections will deal with these two approaches towards developing the region estimators.

\section{Optimal CRs Focused on Symmetric Distributions}
\label{sect-symmetric distributions}

\subsection{For Uniform Distributions}

We illustrate the different approaches for dealing with the situation of unknown $l(k;F), k=0,1,\ldots,n$. Let us consider approach (i) first. Suppose that we consider the subfamily of uniform distributions. It suffices to consider $F = U[-\alpha,\alpha]$ for $\alpha > 0$. Note that $X \sim U[-\alpha,\alpha]$ iff $X \stackrel{d}{=} (2V - 1)\alpha$ with $V \sim U[0,1]$. Thus, $X_{(k)} \stackrel{d}{=} (2V_{(k)} - 1)\alpha$ and it is well-known that $E(V_{(k)}) = k/(n+1)$. As such $E(X_{(k)}) = (2k/(n+1)-1)\alpha$, hence
$$l(k;F) = \frac{2\alpha}{n+1} [ (k+1) - k] = \frac{2\alpha}{n+1}, k=0,1,\ldots,n.$$
Consequently, the ratios of interest are
$$r(k;F) = \frac{b(k;n,1/2)}{l(k;F)} = \frac{{n \choose k} 2^{-n}}{(2\alpha/(n+1))} \propto {n \choose k}, k=0,1,\ldots,n.$$
Hence, the optimal $\Gamma_0^*$ is of form
$\Gamma_0^* = \left\{k \in \{0,1,\ldots,n\}: {n \choose k} > c\right\}$
where $c$ is the smallest value such that $P\{B \in \Gamma_0^*\} \le 1-\alpha$. Since the mapping from $k \in \{0,1,\ldots,n\}$  into ${n \choose k}$ is symmetric about $n/2$ and decreases as $|k - n/2|$ increases, then $\Gamma_0^* = \{k \in \{0,1,\ldots,n\}: |k - n/2| < d\}$ for $d$ satisfying
\begin{eqnarray*}
d & = & \sup\left\{e: P\{|B - \frac{n}{2}| < e\} \le 1 - \alpha\right\} 
 =    \sup\left\{e: P\{B < \frac{n}{2} + e\} \le 1 - \alpha/2\right\}.
\end{eqnarray*}
This implies that $(n/2)+d$ is the $(1-\alpha/2)$th quantile of $\mathfrak{B}(\cdot;n,1/2)$, denoted by $b_{n,1/2;\alpha/2}$, obtainable via the {\tt qbinom} function in {\tt R} (\cite{R}). Letting $k_2 = b_{n,1/2;\alpha/2}$ and $k_1 = n - k_2$, then 
$$P\{k_1 < B < k_2\} \le 1 - \alpha \le P\{k_1 \le B \le k_2\}.$$
With 
$$\gamma = \frac{(1-\alpha) - P\{k_1 < B < k_2\}}{2\Pr\{B = k_2\}},$$
the resulting randomized confidence region for $\Delta$ is
\begin{equation}
\label{optimal for symmetric}
\Gamma_{10}^*(X_{()},U) = 
\left\{
\begin{array}{ccc}
\left.\left[X_{(k_1+1)}, X_{(k_2)}\right.\right) & \mbox{if} &  U > \gamma \\
\left.\left[X_{(k_1)}, X_{(k_2+1)}\right.\right) & \mbox{if} & U \le \gamma
\end{array}
\right..
\end{equation}
This $\Gamma_{10}^*$ CR procedure is the {\em randomized} version of the sign-statistic based CR, given in (\ref{sign stat CR}) and denoted by $\Gamma_3$. This $\Gamma_3$ CR was the confidence interval developed in \cite{Tho36}.

\subsection{For General Symmetric Distributions}

We assumed the uniform family of distributions in the preceding subsection. A question arises whether we obtain the same CR if $F$ belongs to the subfamily $\mathfrak{F}_{c,0}^{sym}$ of $\mathfrak{F}_{c,0}$, the additional condition being symmetry of the distribution. For instance, the classes of normal, Cauchy, logistic, double exponential, symmetric mixtures of distributions all belong to this subclass. It turns out that $\Gamma_{10}^*$ is also optimal for this larger subclass as a consequence of Theorem \ref{theo-about ratio}. The CR based on the Wilcoxon signed-rank statistic is an appropriate CR for these symmetric class of continuous distributions; however, this CR is {\em not} a legitimate competitor of $\Gamma_{10}^*(X_{()},U)$ since it does {\em not} satisfy the confidence level requirement for non-symmetric distributions which are still under the NMEM.

\begin{theo}
\label{theo-about ratio}
Let $X_1, \ldots, X_n$ be IID from $F \in \mathfrak{F}_{c,0}^{sym}$ and define $l(k;F) = E_F(X_{(k+1)}) - E_F(X_{(k)}), k=0, 1,\ldots,n$, with $X_{(0)} = \inf\{x \in \Re: F(x) > 0\}$ and $X_{(n+1)} = \sup\{x \in \Re: F(x) < 1\}$.
Let 
$$r(k;F) = \frac{{n \choose k}2^{-n}}{l(k;F)}, k=0,1,\ldots,n.$$
\begin{itemize}
\item[(i)] If $n$ is even, then $r(k;F)$ is maximized at $k = n/2$ and it decreases from the maximum as $|k - n/2|$ increases.
\item[(ii)] If $n$ is odd, then $r(k;F)$ is maximized at $k \in \{(n-1)/2,(n+1)/2\}$ and decreases when $(n-1)/2 - k$ increases for $k \le (n-1)/2$ or when $k - (n+1)/2 $ increases for $k \ge (n+1)/2$.
\end{itemize}
\end{theo}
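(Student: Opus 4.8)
The plan is to reduce the claim about the ratio $r(k;F) = \binom{n}{k}2^{-n}/l(k;F)$ to a statement about how $l(k;F)$ compares across values of $k$, since the binomial factor $\binom{n}{k}2^{-n}$ is already known to be symmetric and unimodal about $n/2$. Specifically, if I can show that for a symmetric $F$ the map $k \mapsto l(k;F)$ is \emph{symmetric} about $n/2$ (i.e.\ $l(k;F) = l(n-k;F)$) and is \emph{unimodal with a trough} at the center (i.e.\ $l(k;F)$ increases as $|k-n/2|$ increases, or at least does not decrease fast enough to overturn the binomial's decay), then dividing the unimodal-with-peak sequence $\binom{n}{k}2^{-n}$ by $l(k;F)$ preserves the peak at the center. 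The cleanest route is: (a) establish the symmetry $l(k;F)=l(n-k;F)$; (b) establish that $\binom{n}{k}2^{-n}/l(k;F)$ is monotone in $k$ on $\{0,1,\dots,\lfloor n/2\rfloor\}$ by comparing consecutive ratios, i.e.\ showing $r(k;F) \le r(k+1;F)$ for $k < n/2$, equivalently
$$\frac{\binom{n}{k}}{\binom{n}{k+1}} \le \frac{l(k;F)}{l(k+1;F)}, \qquad k < n/2.$$
Since $\binom{n}{k}/\binom{n}{k+1} = (k+1)/(n-k) \le 1$ for $k<n/2$, it suffices to show $l(k;F) \le l(k+1;F)$ for $k$ in the lower half — that is, that the gap expectations $l(k;F)$ \emph{increase} as $k$ moves from $0$ up toward $n/2$. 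Together with the symmetry from (a), this yields both statements (i) and (ii); the only difference between even and odd $n$ is whether the center is a single index $n/2$ or the pair $\{(n-1)/2,(n+1)/2\}$, and in the odd case $l((n-1)/2;F)=l((n+1)/2;F)$ by symmetry so the two central ratios are equal.

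For the symmetry in (a), I would use that if $X \sim F$ with $F$ symmetric about $0$ then $-X \sim F$ as well, so $X_{(k)} \stackrel{d}{=} -X_{(n+1-k)}$ for $k=1,\dots,n$ (and the analogous identity for the boundary conventions $X_{(0)},X_{(n+1)}$). Taking expectations, $E_F[X_{(k)}] = -E_F[X_{(n+1-k)}]$, whence
$$l(k;F) = E_F[X_{(k+1)}] - E_F[X_{(k)}] = -E_F[X_{(n-k)}] + E_F[X_{(n+1-k)}] = l(n-k;F),$$
which is exactly the required reflection symmetry of $k\mapsto l(k;F)$ about $n/2$.

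The substantive step — and the one I expect to be the main obstacle — is (b): showing $l(k;F) \le l(k+1;F)$ for $k < n/2$, i.e.\ that consecutive order-statistic gaps are increasing as one moves in from the left toward the median. I would attack this via the density/integral representation of the gap: writing $l(k;F) = \binom{n}{k}\int_{-\infty}^\infty F(w)^k (1-F(w))^{n-k}\,dw$ (the same representation used later for $\hat l(k)$), after the substitution $u = F(w)$ this becomes $\binom{n}{k}\int_0^1 u^k(1-u)^{n-k}\, \frac{du}{f(F^{-1}(u))}$, i.e.\ $\int_0^1 \mathrm{Beta}(u;k+1,n-k+1)\,g(u)\,du$ up to the normalizing constant of the Beta$(k+1,n-k+1)$ density, where $g(u) = 1/f(F^{-1}(u))$ is the quantile density. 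Symmetry of $F$ forces $g(u) = g(1-u)$, so $g$ is symmetric about $u=1/2$; the comparison $l(k;F)\le l(k+1;F)$ then reduces to a statement that integrating the symmetric function $g$ against the Beta$(k+1,n-k+1)$ density — which for $k<n/2$ is skewed toward smaller $u$ — gives a smaller value than integrating against Beta$(k+2,n-k)$, which sits closer to the symmetric center $1/2$. The key structural fact to exploit is a likelihood-ratio/variation-diminishing comparison between these two Beta densities together with the symmetry of $g$ about $1/2$: the difference of the two Beta densities changes sign in a controlled way (it is a ``fanning'' toward the center), and pairing points $u$ and $1-u$ lets one cancel the symmetric part of $g$ and conclude. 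I would present this as a short lemma about $\int_0^1 [\beta_{k+2,n-k}(u) - \beta_{k+1,n-k+1}(u)] g(u)\,du \ge 0$ for $g$ symmetric about $1/2$ and $k<n/2$, proved by writing the bracketed difference explicitly (it is a constant times $u^{k+1}(1-u)^{n-k-1}(2u-1)$ after using $\beta_{a,b}(u)\propto u^{a-1}(1-u)^{b-1}$ and the Beta-function identities), and then using the substitution $u\leftrightarrow 1-u$ on the part of the integral over $(0,1/2)$ to combine it with the part over $(1/2,1)$, at which point the integrand becomes manifestly nonnegative because $g(u)=g(1-u)$ and the remaining factor $u^{k+1}(1-u)^{n-k-1} - u^{n-k-1}(1-u)^{k+1}$ has the same sign as $2u-1$ when $k+1 < n-k-1$, i.e.\ exactly when $k<n/2$. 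This is where all the work is; everything else is bookkeeping with the boundary conventions $X_{(0)},X_{(n+1)}$ (where one must check the monotonicity statement still makes sense even when $l(0;F)$ or $l(n;F)$ is infinite, in which case the ratio $r$ there is $0$ and the claimed decrease is trivial).
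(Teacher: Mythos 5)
Your toolkit is the right one---the representation $l(k;F)=\binom{n}{k}\int_0^1 u^k(1-u)^{n-k}g(u)\,du$ with $g(u)=1/f(F^{-1}(u))$ symmetric about $1/2$, and the $u\leftrightarrow 1-u$ folding---but the reduction in step (b) is broken. You need $r(k)\le r(k+1)$, i.e.\ $\tfrac{k+1}{n-k}\le \tfrac{l(k)}{l(k+1)}$; from $\tfrac{k+1}{n-k}\le 1$ the condition that would suffice is $l(k)/l(k+1)\ge 1$, i.e.\ $l(k)\ge l(k+1)$, not $l(k)\le l(k+1)$ as you wrote (an upper bound on $l(k)/l(k+1)$ cannot certify the lower bound you need). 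Worse, no monotonicity of $k\mapsto l(k;F)$ on the lower half holds uniformly over $\mathfrak{F}_{c,0}^{sym}$: for the normal the gaps shrink toward the median ($l(k)\ge l(k+1)$ for $k<n/2$), while for a symmetric bimodal law with well-separated modes the central gap is the largest one ($l(k)\le l(k+1)$)---and the paper stresses that the theorem must survive bimodality. So reducing to monotonicity of $l$ cannot work in either direction. Your key lemma also rests on a miscomputation: $\beta_{k+2,n-k}(u)-\beta_{k+1,n-k+1}(u)$ equals a positive constant times $u^k(1-u)^{n-k-1}\,[(n+1)u-(k+1)]$, whose sign change is at $(k+1)/(n+1)$, not at $1/2$; it is not proportional to $u^{k+1}(1-u)^{n-k-1}(2u-1)$, and that lemma in any case asserts $l(k+1)\ge l(k)$, which is not the inequality the theorem requires.

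The missing observation---and essentially the paper's entire proof---is that the binomial coefficient cancels: $r(k;F)=\binom{n}{k}2^{-n}/l(k;F)=2^{-n}/Q(k;F)$ with $Q(k;F)=\int_0^1 u^k(1-u)^{n-k}g(u)\,du$ (no $\binom{n}{k}$, no Beta normalization). One then only has to show that $Q$ is symmetric about $n/2$ and decreases to its minimum at the center. Folding the integral about $u=1/2$ and using $g(u)=g(1-u)$ gives
\begin{displaymath}
Q(k;F)-Q(k+1;F)=\int_0^{1/2}g(u)(1-2u)\bigl[u^k(1-u)^{n-k-1}-u^{n-k-1}(1-u)^k\bigr]\,du ,
\end{displaymath}
and on $(0,1/2)$ the bracket has the sign of $(n-1)/2-k$ because $u/(1-u)<1$ there, while every other factor is nonnegative; the boundary indices with $l(k;F)=\infty$ give $r(k;F)=0$ and are harmless. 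This is exactly the ``manifestly signed integrand'' you were reaching for, but it only materializes for the unnormalized kernels $u^k(1-u)^{n-k}$, i.e.\ after the cancellation you skipped.
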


The results stated in Theorem \ref{theo-about ratio} appear intuitive when the distribution $F \in \mathfrak{F}_{c,0}^{sym}$ is unimodal. What is surprising is that the results also hold when $F$ is bi-modal or multi-modal. For instance, if we take a symmetric mixture of a $N(-\mu,\sigma)$ and a $N(\mu,\sigma)$, even when $\mu$ is quite large relative to $\sigma$, the results still hold true. 
We present a mathematical proof of Theorem \ref{theo-about ratio}. To do so we first establish an identity analogous to that given by Pearson in a paper of \cite{Gal1902}. 

\begin{lemm}
\label{lemm-expression for lk}
Let $X_{(1)} < \ldots < X_{(n)}$ be the associated order statistics for a random sample of size $n$ from a continuous distribution $F$. For $k = 0, 1, 2, \ldots, n-1,n$, and provided that the expectations are well-defined with possibly a value of $\infty$, then
$$l(k;F) \equiv E_F(X_{(k+1)})-E_F(X_{(k)}) = {n \choose k} \int_{-\infty}^\infty F(x)^k  (1-F(x))^{n-k} dx.$$
\end{lemm}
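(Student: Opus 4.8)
The plan is to use the ``layer-cake'' representation of the expectation of a nonnegative random variable together with Tonelli's theorem, which reduces the identity to a one-line combinatorial fact about order statistics; this is essentially Pearson's original argument.

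First I would note that $X_{(k+1)} - X_{(k)} \ge 0$ almost surely and that its value is the Lebesgue length of the random interval $[X_{(k)},X_{(k+1)})$, so that
$$X_{(k+1)} - X_{(k)} = \int_{-\infty}^\infty I\{X_{(k)} \le x < X_{(k+1)}\}\, dx \quad \mbox{a.s.}$$
Taking expectations and interchanging expectation and integral — legitimate by Tonelli since the integrand is nonnegative, even when the common value is $+\infty$ — gives
$$l(k;F) = E_F(X_{(k+1)}) - E_F(X_{(k)}) = \int_{-\infty}^\infty P_F\{X_{(k)} \le x < X_{(k+1)}\}\, dx.$$

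Second I would identify the integrand. For fixed $x$, let $N(x) = \sum_{i=1}^n I\{X_i \le x\}$ be the number of sample points in $(-\infty,x]$. Since $\{X_{(k)} \le x\} = \{N(x) \ge k\}$ and $\{x < X_{(k+1)}\} = \{N(x) \le k\}$, the event $\{X_{(k)} \le x < X_{(k+1)}\}$ coincides with $\{N(x) = k\}$; and because $N(x)$ is Binomial$(n,F(x))$ by independence and continuity of $F$, we obtain $P_F\{X_{(k)} \le x < X_{(k+1)}\} = {n \choose k} F(x)^k (1-F(x))^{n-k}$. Substituting into the previous display yields the asserted identity for $1 \le k \le n-1$. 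For the boundary indices $k=0$ and $k=n$ the same computation applies once $X_{(0)}$ and $X_{(n+1)}$ are read according to their stated conventions: the event identity becomes $\{x < X_{(1)}\} = \{N(x)=0\}$, respectively $\{X_{(n)} \le x\} = \{N(x)=n\}$, and both sides of the claimed equality are to be understood as possibly equal to $+\infty$.

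I do not expect a genuine obstacle here. The only two points needing care are (a) the interchange of expectation and integration, which is handled cleanly by Tonelli thanks to nonnegativity, and (b) the bookkeeping at $k \in \{0,n\}$, where one must be explicit about the conventions for $X_{(0)}$ and $X_{(n+1)}$ and permit the value $+\infty$. An essentially equivalent alternative — closest to Pearson's derivation in \cite{Gal1902} — would instead start from $E_F(Y) = \int_0^\infty P\{Y > t\}\,dt - \int_{-\infty}^0 P\{Y \le t\}\,dt$ applied to $Y = X_{(k+1)}$ and $Y = X_{(k)}$, subtract, and use $P\{X_{(k+1)} > t\} - P\{X_{(k)} > t\} = P\{X_{(k)} \le t < X_{(k+1)}\}$ to reach the same integral.
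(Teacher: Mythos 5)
Your proposal is correct and is essentially the paper's own argument: the paper evaluates $E_F(X_{(k+1)})-E_F(X_{(k)})$ via the signed tail-integral formula for expectations and then differences the order-statistic distribution functions, which produces exactly your key identity $P_F\{X_{(k)}\le x<X_{(k+1)}\}={n\choose k}F(x)^k(1-F(x))^{n-k}$; your Tonelli/occupation-time packaging of the spacing is just the cosmetic variant you yourself flag as equivalent at the end. One shared (not newly introduced) imprecision: at $k\in\{0,n\}$ the displayed integral over all of $\Re$ equals $+\infty$ whenever $F$ has a finite support endpoint, while the left-hand side under the paper's conventions for $X_{(0)}$ and $X_{(n+1)}$ is then finite, so the boundary cases really require restricting the integral to the support rather than merely ``permitting the value $+\infty$.''
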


\begin{proof}
Recall that for a positive-valued continuous random variable $W$, we have $E(W) = \int_0^\infty [1-F_W(w)] dw$, hence for a general continuous $W$, $$E(W) =  \int_0^\infty [1-F_W(w)] dw - \int_{-\infty}^0 F_W(w) dw.$$ For $k \in \{1,2,\ldots,n\}$, we also recall that $P\{X_{(k)} \le y \} = \sum_{j=k}^n {n \choose j} F(y)^k [1-F(y)]^{n-j}$. Using these expressions, we immediately find that
\begin{eqnarray*}
l(k;F) & = & E[X_{(k+1)}]  - E[X_{(k)}] = E[X_{k+1}^+ - X_{(k)}^+] - 
E[X_{k+1}^- - X_{(k)}^-] \\
& = & \int_0^\infty [(1-P\{X_{(k+1)} \le y\}) - (1-P\{X_{(k)} \le y\})]dy - \\
& & \int_{-\infty}^0 [P\{X_{(k+1)} \le y\} - P\{X_{(k)} \le y\}] dy \\
& = & {n \choose k} \left[\int_0^\infty F(y)^k [1-F(y)]^{n-k} dy + \int_{-\infty}^0 F(y)^k [1-F(y)^{n-k} dy\right] \\
& = & \int_{-\infty}^\infty F(y)^k [1-F(y)]^{n-k} dy.
\end{eqnarray*}
\end{proof}

We now prove Theorem \ref{theo-about ratio}.

\begin{proof}
Using the representation in Lemma \ref{lemm-expression for lk}, we have for $k = 0,1,\ldots,n$ that
\begin{displaymath}
r(k;F) = 2^{-n} \left\{\int_{-\infty}^\infty (1-F(x))^{n-k}F(x)^k dx\right\}^{-1}.
\end{displaymath}
Let $Q(k;F) = \int_{-\infty}^\infty (1-F(x))^{n-k}F(x)^k dx$. We prove the results by showing that $Q(k;F)$ is symmetric about $n/2$ and that it first decreases then increases with the maximum occurring at values of $k$ that depends on whether $n$ is odd or even. Making the transformation $u = F(x)$ in the expression of $Q(k;F)$ and denoting by $f$ the density function of $F$, we have
\begin{eqnarray*}
Q(k;F)  & = & \int_0^1 \frac{(1-u)^{n-k} u^k}{f[F^{-1}(u)]} du 
 = \int_0^{1/2} \frac{(1-u)^{n-k} u^k}{f[F^{-1}(u)]} du + 
\int_{1/2}^1 \frac{(1-u)^{n-k} u^k}{f[F^{-1}(u)]} du.
\end{eqnarray*}
In the last integral, let $w = 1 - u$ and note that since $F \in \mathfrak{F}_{c,0}^{sym}$, $F^{-1}(u) = -F^{-1}(1-u)$ so that $f[F^{-1}(u)] = f[-F^{-1}(u)] = f[F^{-1}(1-u)]$. Consequently,
\begin{eqnarray*}
Q(k;F) & = & \int_0^{1/2} \frac{(1-u)^{n-k} u^k + (1-u)^k u^{n-k}}{f[F^{-1}(u)]} du.
\end{eqnarray*}
Letting $c(u) = 1/f[F^{-1}(u)]$ and $D(k;F) = Q(k;F) - Q(k+1;F)$, it follows that
\begin{eqnarray*}
D(k;F) & = & \int_0^{1/2} c(u) (1-2u)\left[(1-u)^{n-k-1}u^k - (1-u)^ku^{n-k-1}\right] du \\
& = & \int_0^{1/2} c(u)(1-2u)(1-u)^{n-k-1}u^k \left[1 - \left(\frac{u}{1-u}\right)^{n-(2k+1)}\right] du.
\end{eqnarray*}
In the last integral, all terms in the integrand outside the brackets are nonnegative. Let
$b(u,\alpha) = 1 - [{u}/{(1-u)}]^\alpha$ for $\alpha \in \Re, u \in [0,1/2].$
For this function, we have
\begin{displaymath}
\lim_{u \downarrow 0} b(u;\alpha) = I\{\alpha > 0\} -\infty I\{\alpha < 0\} \quad \mbox{and} \quad b(1/2;\alpha) = 0 \ \mbox{all $\alpha$}.
\end{displaymath}
In addition, we have
\begin{displaymath}
b^\prime(u;\alpha) \equiv \frac{d}{du} b(u;\alpha) = 
-\alpha\left(\frac{u}{1-u}\right)^{\alpha-1} \frac{1}{(1-u)^2} 
\left\{
\begin{array}{ccc}
< 0 & \mbox{if} & \alpha > 0 \\
= 0 & \mbox{if} & \alpha = 0 \\
> 0 & \mbox{if} & \alpha < 0
\end{array}
\right..
\end{displaymath}
Therefore, on $u \in (0,1/2]$, $u \mapsto b(u;\alpha)$ is decreasing when $\alpha > 0$; constant at $0$ when $\alpha = 0$; and increasing when $\alpha < 0$. Consequently,
\begin{displaymath}
D(k;\alpha) = Q(k;F) - Q(k+1;F) = 
\left\{
\begin{array}{ccc}
< 0 & \mbox{if} & k < (n-1)/2 \\
= 0 & \mbox{if} & k = (n-1)/2 \\
> 0 & \mbox{if} & k > (n-1)/2
\end{array}
\right..
\end{displaymath}
This completes the proof of the theorem.
\end{proof}

\section{Optimal CRs Focused on Exponential Distributions}
\label{sect-exponential distributions}

Next, we consider the situation where the focused class of distributions is the negative exponential familiy, a right-skewed class of distributions, in contrast to the symmetric distributions considered above. Let $X_1,\ldots,X_n$ be IID from an exponential distribution (Exp($\lambda$)) $F(x;\lambda) = [1-\exp(-\lambda x)] I\{x \ge 0\}$ so the common median is $\Delta = \lambda^{-1} \log(2)$. From the normalized spacings theory (see, for instance, \cite{BarPro75}) we have that
$$X_{(k)} = \sum_{j=1}^k D_j/(n-j+1) \stackrel{d}{=} \sum_{j=1}^k X_j/(n-j+1), k=1,2,\ldots,n,$$
where $D_j = (n-j+1)[X_{(j)}-X_{(j-1)}], j=1,2,\ldots,n,$ are the normalized spacings statistics, which are also IID from Exp($\lambda$). As such, with $X_{(0)} \equiv 0$, we obtain
$$l(k;F) = E[X_{(k+1)}] - E[X_{(k)}] = \frac{1}{\lambda (n-k)}, k=0,1,2,\ldots,n.$$
It follows that under this negative exponential distribution model, 
\begin{displaymath}
r(k;F) = \frac{{n \choose k} 2^{-n}}{[\lambda(n-k)]^{-1}} \propto {n-1 \choose k}, k=0,1,2,\ldots,n.
\end{displaymath}
The optimal $\Gamma_0$ will therefore be of form
$$\Gamma_0^* = \left\{k \in \{0,1,\ldots,n\}: {n-1 \choose k} > c\right\}$$
with $c$ chosen to be the smallest value such that
$P\{B \in \Gamma_0^*\} \le 1 - \alpha.$
Note that this subset $\Gamma_0^*$ will never include $n$, but it could include $0$. We shall denote by $\Gamma_{11}^*(X_{()},U)$ the resulting randomized CR for $\Delta$ under this exponentially-distributed focused case.

We illustrate the resulting CR for concrete values of $n$. We start with $n = 10$, an even sample size. Using {\tt R} \cite{R} we obtain for each $k \in \{0,1,2,\ldots,n\}$ the values of $r(k;F)$ (up to proportionality) and $P\{B = k\}$ given in Table \ref{table-exponential example}.
\begin{table}
\caption{Values of $(k,r(k),P(B=k))$ for $k=0,1,2,\ldots,n$ for the case $n=10$ when $F$ is assumed to be a negative exponential distribution.}
\label{table-exponential example}
\begin{center}
\begin{tabular}{ccc} \hline
    $k$ &   $r(k)$ &    $P(B = k)$ \\ \hline
   0  & 1 & 0.0009765625 \\
   1   & 9 & 0.0097656250 \\
   2  & 36 & 0.0439453125 \\
   3  & 84 & 0.1171875000 \\
   4 & 126 & 0.2050781250 \\
   5 & 126 & 0.2460937500 \\
   6 & 84 & 0.2050781250 \\
   7  & 36 & 0.1171875000 \\
   8  & 9 & 0.0439453125 \\
  9  & 1 & 0.0097656250 \\
 10 &  0 & 0.0009765625 \\ \hline
\end{tabular}
\end{center}
\end{table}
Observe that the way we start including values of $k$ into $\Gamma_0^*$ is according to the value of $r(k)$. Thus we start by first including $k$-values of 4 and 5; then 3 and 6; etc. Observe the asymmetry in the process of including the $k$-values into $\Gamma_0^*$ with a bias in favor of the lower $k$-values. The intuition behind this is that since the exponential distribution is highly right-skewed, then the expected lengths between successive order statistics increases as $k$ increases, which is formally indicated by the $l(k;F) = 1/[\lambda(n-k)]$ expression. Thus, to shorten the interval, there is preference for the lower order statistics.
For a 95\% confidence region, from the table we find that $\Gamma_0^* = \{2,3,4,5,6,7\}$ which yields $P\{B \in \Gamma_0^*\} = .9346$. The randomization probability then becomes
$$\gamma = \frac{.95 - .9346}{.0537} = .2867.$$
The 95\% (randomized) confidence region for the median will then be
\begin{displaymath}
\Gamma_{11}^*(X,U) = [X_{(2)},X_{(8)}) I\{U > .2867\} + [X_{(1)},X_{(9)}) I\{U \le .2867\}.
\end{displaymath}
When $n = 11$, an odd sample size, by following the same calculations as for $n = 10$ and with the first $k$-value to enter being $k = 5$, we find the 95\% (randomized) confidence region for the median to be
\begin{displaymath}
\Gamma_{11}^*(X,U) = [X_{(3)},X_{(8)}) I\{U > .6758\} + [X_{(2)},X_{(9)}) I\{U \le .6758\}.
\end{displaymath}

\section{Data-Adaptive Methods}
\label{sect-adaptive methods}

Next we demonstrate approach (ii). In this situation we do not know the $l(k;F)$'s so we instead estimate these quantities using the observed data. Recall that $l(k;F) = E_F[X_{(k+1)} - X_{(k)}]$ so that without any knowledge about the underlying $F$ we will not have a closed-form expression for these $l(k;F)$'s. However, given the sample data, we could estimate $l(k;F)$, {\em unbiasedly}, by 
\begin{displaymath}
\hat{l}(k) = X_{(k+1)} - X_{(k)}, k=0,1,2,\ldots,n,
\end{displaymath}
which is the method-of-moments estimator. This estimator though maybe unstable or inefficient.
Using this estimator, we may then order the $k \in \{0,1,\ldots,n\}$ in terms of priority of entry into $\Gamma_0^*$ according to the quantities
\begin{displaymath}
\hat{r}(k) \propto \frac{{n \choose k}}{X_{(k+1)} - X_{(k)}}, k=0,1,2,\ldots,n. 
\end{displaymath}
As such the form of the `optimal' $\Gamma_0$ will be
\begin{displaymath}
\Gamma_0^* = \left\{k \in \{0,1,2,\ldots,n\}:  \frac{{n \choose k}}{X_{(k+1)} - X_{(k)}} > c\right\}.
\end{displaymath}
We shall denote by $\Gamma_{12}^*(X_{()},U)$ the resulting randomized CR.
The constant $c$ will be chosen to be the smallest value such that $P\{B \in \Gamma_0^*|X\} \le 1 - \alpha$. This value of $c$ will depend on the sample data $X$ since the ordering of entry of the $k$-values into $\Gamma_0^*$ will depend on $X$, so that the randomization probability $\gamma$ will also depend on $X$. Because of this dependence of the $c$ and $\gamma$ values to the sample data $X$, it is possible that the achieved confidence coefficient of the resulting confidence region will {\em not} anymore be $100(1-\alpha)\%$. In the simulation studies in section \ref{sec-Simulation Comparisons of Some of the Methods} we will indeed see that there is degradation in terms of the achieved confidence level for this CR.

A possibly better adaptive approach is to utilize the representation of $l(k;F)$ in Lemma \ref{lemm-expression for lk} and to replace the unknown $F(\cdot)$ in the expression by an estimator based on the $X_i-\hat{\Delta}, i=1,2,\ldots,n$, where $\hat{\Delta}$ is the sample median of the $X_i$'s. We expect this will lead to a better procedure since the estimators of the $l(k;F)$'s will be more stable compared to the method-of-moments estimator discussed above. In our implementation of this idea, we use as our estimator of $F$ the empirical distribution function $\hat{F}$ of $X_i - \hat{\Delta}, i=1,2,\ldots,n$, given by
$\hat{F}(t) = \frac{1}{n} \sum_{i=1}^n I\{X_i - \hat{\Delta} \le t\}, t \in \Re.$
The resulting estimator of $l(k;F)$ is then
\begin{eqnarray*}
\hat{l}(k) & = & {n \choose k} \int_{-\infty}^\infty (1-\hat{F}(x))^{n-k}\hat{F}(x)^k dx = 
{n \choose k} \sum_{i=2}^n \left[1 - \frac{i-1}{n}\right]^{n-k} \left[\frac{i-1}{n}\right]^k (X_{(i)} - X_{(i-1)}).
\end{eqnarray*}
Observe that we could also just have used the EDF of the $X_i$'s instead of the $(X_i-\hat{\Delta})$'s since the $\hat{\Delta}$ cancel out. As such, to order the $k \in \{0,1,\ldots,n\}$ in terms of entry into $\Gamma_0^*$, we use
$$\hat{r}(k) \propto \frac{{n \choose k}}{\hat{l}(k)} = \left\{\sum_{i=2}^n \left[1 - \frac{i-1}{n}\right]^{n-k} \left[\frac{i-1}{n}\right]^k [X_{(i)} - X_{(i-1)}]\right\}^{-1}.$$
The resulting randomized CR for $\Delta$ will be denoted by $\Gamma_{13}^*(X_{()},U)$.

These adaptive procedures are totally nonparametric in the sense that no knowledge of the underlying distribution $F$ is required. If we do have knowledge of the families of distributions in which $F$ belongs, then we may be able to provide a better estimate of the $l(k;F)$'s as in the preceding subsection. It should also be noted, however, that these adaptive procedures may not anymore satisfy the confidence level requirement due to the data-dependent plug-in step. Section \ref{sec-Simulation Comparisons of Some of the Methods}, which presents results of simulation studies, provides some insights regarding the empirical properties of these procedures.

\section{Brief Review of Existing `Off-the-Shelf' Median CRs}
\label{sec-current methods}

There are several existing methods for constructing frequentist-based $100(1-\alpha)\%$ CRs for $\Delta$ under the NMEM. We describe some of these methods prior to our illustrations and simulation studies. For a sample realization $\mathbf{x} = (x_1,\ldots,x_n)$, we define the usual sample statistics:
$$\bar{x} = \frac{1}{n} \sum_{i=1}^n x_i; \quad s^2 = s^2(\mathbf{x}) = \frac{1}{n-1} \sum_{i=1}^n (x_i-\bar{x})^2; \quad
\hat{\Delta} = m = m(\mathbf{x}) = \mbox{med}(\mathbf{x}).$$
$\bar{X}$, $S^2$, $\hat{\Delta}$, and $M$ will then represent the random versions of these sample statistics. As earlier mentioned, $\phi(z)$ and $\Phi(z)$ will be 
%
%
the standard normal density and distribution functions, respectively, $\Phi^{-1}(\cdot)$ the standard normal quantile function. We will use the conventional notation $z_\beta = \Phi^{-1}(1-\beta)$ for the $(1-\beta)$th quantile of $\Phi(\cdot)$.  $\mathcal{T}(\cdot;k)$ and $\mathcal{T}^{-1}(\cdot;k)$ will denote the distribution and quantile functions, respectively, of a Student's $T$ random variable with degrees-of-freedom $k$, and its $(1-\beta)$th quantile will be denoted by $t_{k;\beta}$.

Arguably, the most commonly-used CR for the center of a distribution, which is $\Delta$ for symmetric distributions, is the $T$-based CR given by
\begin{equation}
\label{t-based CR}
\Gamma_1(\mathbf{X}) = \left[\bar{X} \pm t_{n-1;\alpha/2} \frac{S}{\sqrt{n}}\right].
\end{equation}
However, this CR is actually {\em not} valid under the NMEM since it does not satisfy the condition $P_{(F,\Delta)}\{\Delta \in \Gamma(X)\} \ge 1 - \alpha$ for {\em all} $(F(\cdot),\Delta) \in \mathfrak{F}_{c,0} \times \Re$. We still included this CR since it is typically the first choice to use by practitioners when constructing a confidence interval for $\mu$ or $\Delta$ and we would like to examine and compare its performance with other CRs under the NMEM.

The nonparametric analog of the $T$-based CR is the CR constructed from the Wilcoxon signed-rank statistic $W^+$ (\cite{RanWol79}). The Walsh averages associated with the sample $X_i$'s are $W_{ij} = (X_i + X_j)/2$ for $i \le j$. Let $W_{(1)} \le W_{(2)} \le \ldots \le W_{((n)(n+1)/2)}$ be the order statistics of these Walsh averages. This Wilcoxon signed-rank based nonparametric CR is given by
\begin{equation}
\label{Wilcox-signed CR}
\Gamma_2(\mathbf{X}) =  [W_{(k_1+1)}, W_{(k_2+1)})
\end{equation}
where, with $\mathcal{W}^+(\cdot)$ denoting the null distribution (that is, under $\Delta = 0$ and $F \in \mathfrak{F}_{c,0}^{sym}$) of $W^+$, 
\begin{displaymath}
k_1 = \sup\{w: \mathcal{W}^+(w) \le \alpha/2\} \quad \mbox{and} \quad k_2 = \inf\{w: \mathcal{W}^+(w) \ge 1 - \alpha/2\}.
\end{displaymath}
This CR is valid under $F \in \mathfrak{F}_{c,0}^{sym}$, but not for $F \in \mathfrak{F}_{c,0}$. In contrast, the nonparametric CR derived from the sign statistic is valid under $\mathfrak{F}_{c,0}$ (see \cite{Tho36}). As before, let $\mathfrak{B}(\cdot)$ be the binomial distribution with parameters $n$ and $1/2$. Let 
\begin{displaymath}
k_1 = \sup\{w: \mathfrak{B}(w) \le \alpha/2\} \quad \mbox{and} \quad k_2 = \inf\{w: \mathfrak{B}(w) \ge 1 - \alpha/2\}.
\end{displaymath}
Then, this sign statistic-based CR is
\begin{equation}
\label{sign stat CR}
\Gamma_3(\mathbf{X}) = [X_{(k_1+1)}, X_{(k_2+1)}).
\end{equation}

Another CR of $\Delta$ is developed from the asymptotic normality of the sample median $M(\mathbf{X})$. For $X_1, X_2, \ldots, X_n$ IID from a distribution $F(\cdot)$ with density $f(\cdot)$, this asymptotic distribution (cf., \cite{RanWol79}) is given by
$$M(\mathbf{X}) \sim AN\left[\Delta,\frac{1}{n} \frac{1}{4 f(\Delta)^2}\right].$$
If $\hat{f}(\hat{\Delta};\mathbf{X})$ is an estimator of $f(\Delta)$, then an asymptotic confidence interval for $\Delta$ is
\begin{equation}
\label{sample median based 1}
\Gamma_4(\mathbf{X}) = \left[M(\mathbf{X}) \pm z_{\alpha/2} \left(\frac{1}{\sqrt{n} 2 \hat{f}(\hat{\Delta};\mathbf{X})}\right)\right].
\end{equation}
Kernel-based estimators of the density $f$ are available in {\tt R} using the {\tt density} object function (\cite{R}), and coupled with the {\tt approx} function, $f(\Delta)$ could then be estimated. This is how we implemented this CR in the illustrations and in the simulation studies.

Instead of relying on asymptotic approximations, one may resort to bootstrapping approaches. Let $\mathbf{X}_k^* = (X_{k1}^*,\ldots,X_{kn}^*)$ be the $k$th bootstrap sample out of BREPS bootstrap samples. Denote its sample median by $M_k^*$ .  The basic bootstrap CR using the median obtains the  $\alpha/2$th and $(1-\alpha)/2$th quantiles of $\{{M}^*_k - {M}, k=1,2,\ldots,BREPS\}$, denoted by $\kappa_{1-\alpha/2}^*$ and $\kappa_{\alpha/2}^*$, respectively, and constructs the CR (cf., \cite{EfrHas16}) via
\begin{equation}
\label{basic bootstrap CR}
\Gamma_5(\mathbf{X}) = [M(\mathbf{X}) - \kappa_{\alpha/2}^*, M(\mathbf{X}) - \kappa_{1-\alpha/2}^*].
\end{equation}

The next bootstrapped CR is derived using the studentized median as pivot and with its standard error estimated by $S_{boot}^*$,  the standard deviation of  $\{M_k^*, k=1,2,\ldots, BREPS\}$. The CR is constructed via
\begin{equation}
\label{median based with bootstrap}
\Gamma_6(\mathbf{X}) = \left[ M(\mathbf{X}) \pm t_{n-1;\alpha/2} (S_{boot}^*) \right].
\end{equation}

The bootstrap percentile CR also uses the bootstrap distribution of $M(\mathbf{X})$. Denoting by 
$$\mathcal{B}^*(t) = \frac{1}{BREPS} \sum_{k=1}^{BREPS} I\{M_k^* \le t\}$$
the (empirical) bootstrap distribution, the percentile bootstrap CR is
\begin{equation}
\label{percentile bootstrap CR}
\Gamma_7(\mathbf{X}) = \left[{\mathcal{B}^*}^{-1}(\alpha/2), {\mathcal{B}^*}^{-1}(1-\alpha/2)\right]
\end{equation}
where ${\mathcal{B}^*}^{-1}(\cdot)$ is the bootstrap quantile function.

This percentile bootstrap CR is usually bettered by so-called bias-corrected CRs. See, for instance, chapter 5 of \cite{DavHin97} and chapter 11 of \cite{EfrHas16}. The first improvement is provided by the bias-corrected (BC) CR which is
\begin{equation}
\label{BC CR}
\Gamma_8(\mathbf{X}) = \left[{\mathcal{B}^*}^{-1}\left(\Phi(2z_0 + \Phi^{-1}(\alpha/2))\right), {\mathcal{B}^*}^{-1}\left(\Phi(2z_0 + \Phi^{-1}(1-\alpha/2))\right)\right]
\end{equation}
where $p_0 = \mathcal{B}^*(M(\mathbf{X}))$ and $z_0 = \Phi^{-1}(p_0)$. On the other hand, the BCa method (bias-corrected and accelerated) CR takes the form
\begin{eqnarray}
 \Gamma_9(\mathbf{X}) & =   &
\left[{\mathcal{B}^*}^{-1}\left(\Phi\left[z_0 + \frac{z_0+ \Phi^{-1}(\alpha/2)}{1-a(z_0+ \Phi^{-1}(\alpha/2))}\right]\right), \right. \nonumber
\\
&& 
 \left. {\mathcal{B}^*}^{-1}\left(\Phi\left[z_0 + \frac{z_0+ \Phi^{-1}(1-\alpha/2)}{1-a(z_0+ \Phi^{-1}(1-\alpha/2))}\right]\right)\right]
 \label{BCa CR}
\end{eqnarray}
where the acceleration coefficient $a$ can be estimated using jackknifed samples estimates $\hat{M}_{(i)}, i=1,2,\ldots,n,$ via
$$\hat{a} = \frac{1}{6} \frac{\sum_{i=1}^n (\hat{M}_{(i)} - \hat{M}_{()})^2}
{\left[\sum_{i=1}^n (\hat{M}_{(i)} - \hat{M}_{()})^2\right]^{3/2}}$$
with $\hat{M}_{()} = \frac{1}{n} \sum_{i=1}^n \hat{M}_{(i)}$ with $\hat{M}_{(i)}$ the sample median of the $i$th jackknifed sample $$X_{-i} \equiv (X_1,\ldots,X_{i-1},X_{i+1},\ldots,X_n).$$

Together with these existing CRs for $\Delta$, we add the CRs developed in earlier sections (we drop the superscript `$^*$' in the notation): $\Gamma_{10}$, the CR optimized for symmetric distributions [see equation (\ref{optimal for symmetric})]; $\Gamma_{11}$, the CR optimized for exponential distributions (see section \ref{sect-exponential distributions}); $\Gamma_{12}$, the adaptive CR using the crude estimator for $l(k;F)$; and $\Gamma_{13}$, the adaptive CR using the empirical distribution in the estimator of $l(k;F)$ (see section \ref{sect-adaptive methods}). 
We summarize these different CR procedures examined in the illustrations and the simulation studies in Table \ref{table-methods list}. We note that among these thirteen CRs, those that are location-equivariant and equivariant to monotone transformations are $\Gamma_3$, $\Gamma_{10}$, $\Gamma_{11}$, $\Gamma_{12}$, and $\Gamma_{13}$.

\begin{table}
\caption{List of the confidence regions (CRs) considered in the illustrations and simulations.}
\label{table-methods list}
\begin{center}
\begin{tabular}{cl}
{\bf Method Label} & {\bf Description or Type or Basis of CR} \\ \hline
$\Gamma_1$ & $T$ Statistic Based \\
$\Gamma_2$ & Wilcoxon Signed-Rank Statistic Based\\
$\Gamma_3$ & Sign Statistic Based\\
$\Gamma_4$ & Asymptotic Distribution of Median Based\\
$\Gamma_5$ & Basic Median Bootstrap \\
$\Gamma_6$ & Median with Bootstrapped SE \\
$\Gamma_7$ & Percentile Bootstrapped \\
$\Gamma_8$ & Bias-Corrected (BC) Bootstrapped\\
$\Gamma_9$ & Bias-Corrected Accelerated (BCa) Bootstrapped \\
$\Gamma_{10}$ & Optimally Focused for Symmetric \\
$\Gamma_{11}$ & Optimally Focused for Exponential\\
$\Gamma_{12}$ & Adaptive with $l(k)$ Unbiasedly Estimated\\
$\Gamma_{13}$ & Adaptive with $l(k)$ Estimated using EDF\\ \hline
\end{tabular}
\end{center}
\end{table}

\section{Illustration using Real Data Sets}
\label{sec-illustration}

For our first illustration we use a real data set gathered by the first author about the mileage efficiency of his car during a period when he was commuting between Ann Arbor, Michigan and Bowling Green, Ohio. Efficiency is measured in terms of the miles traveled per gallon of gasoline. In the data set there were $n = 205$ observations, with each observation recorded at each gas fill-up.  The histogram of these observations are shown in Figure \ref{fig-car mileage}. The full data set, which contained other relevant variables, was also used in demonstrating linear model global model validation procedures in \cite{PenSla06}.

\begin{figure}
\caption{Histogram of 205 observations for the variable Average Miles Per Gallon ({\tt AveMilesGal}) for the data set gathered by the first author pertaining to his car.}
\label{fig-car mileage}
\includegraphics[width=\textwidth,height=2.25in,clip=]{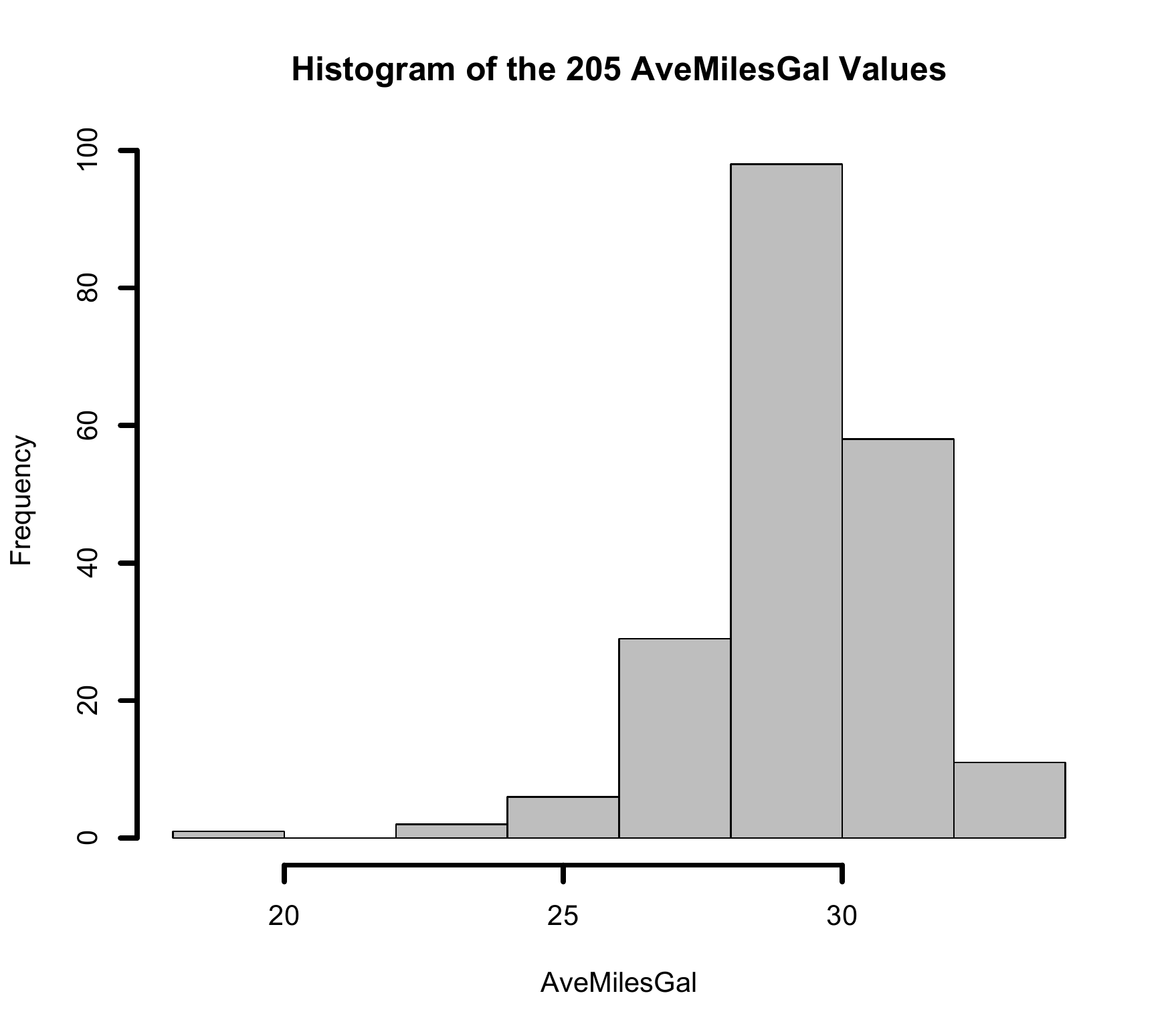}
\end{figure}

For this sample data the sample mean is $\bar{x} = 29.291$, sample median is $m = m(x) = \hat{\Delta} = 29.462$, the first and third sample quartiles are 28.275 and 30.575, respectively, and the extreme values are 18.374 and 33.472. From the histogram we notice that there is a noticeable left-skewness in the distribution. The sample standard deviation is 1.887 and the inter-quartile range is 2.299. For our illustration, we seek a confidence region for the population median $\Delta$, where the population could be thought of as the hypothetical set of all values of {\tt AveMilesGal} if the car has been observed forever. We mention that one interesting aspect of this data set is that the population from which this sample is obtained is a mixture of subpopulations corresponding to summer highway driving, winter highway driving, summer city driving, winter city driving, and a combination of highway and city driving. 

Figure \ref{fig-plot CR AveMilesGal} depicts the 95\% confidence regions produced by the thirteen different methods for the {\tt AveMilesGal} data set. 
\begin{figure}
\caption{The 95\% confidence regions for the median of the {\tt AveMilesGal} population produced by the thirteen different methods in Table \ref{table-methods list}. The horizontal line depicts the sample median.}
\label{fig-plot CR AveMilesGal}
\includegraphics[width=\textwidth,height=2.5in]{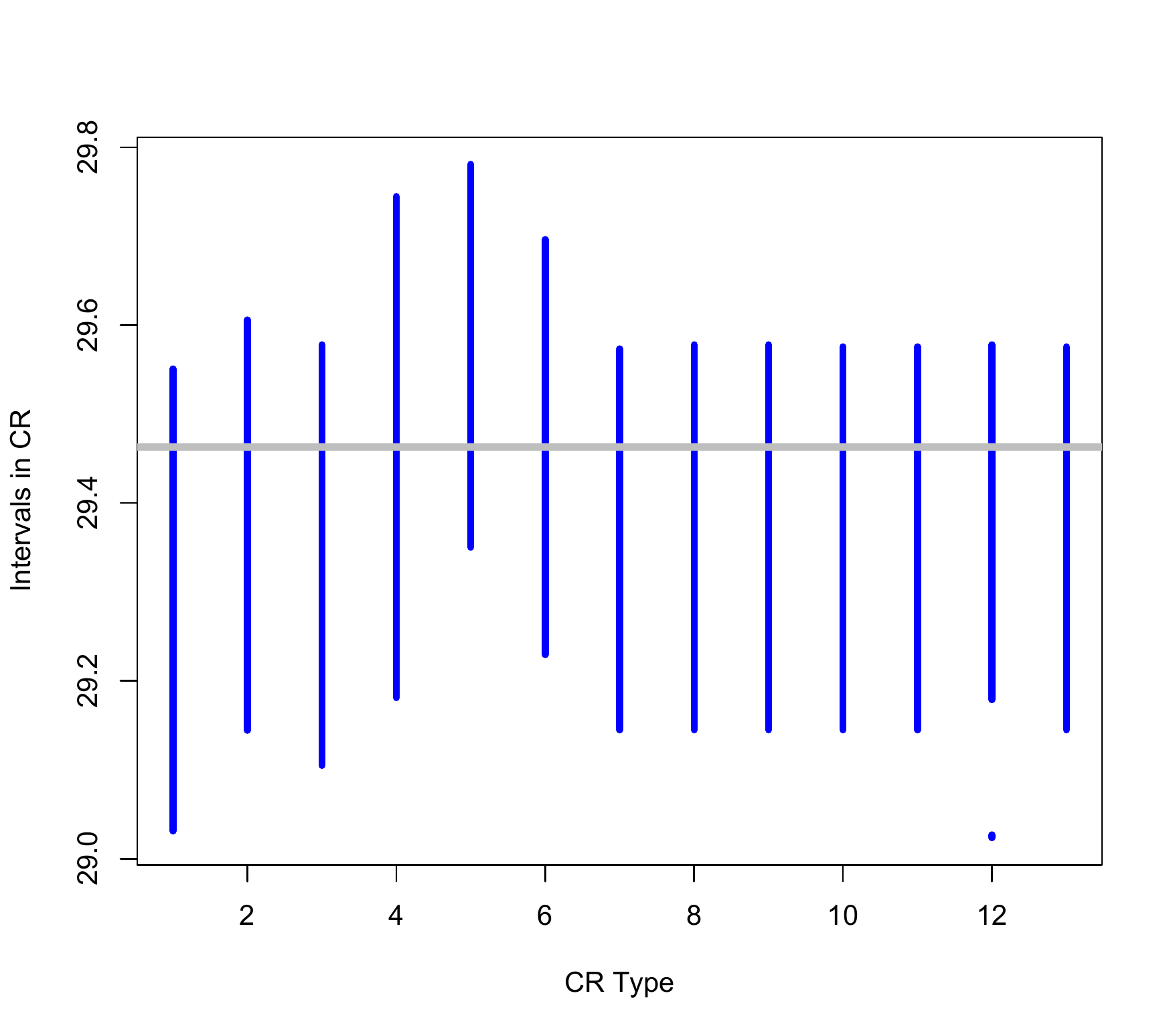}
\end{figure}
Except for CR $\Gamma_{12}$, all of them produced intervals. It is possible that method $\Gamma_{13}$ may also produce a region that is not an interval, while CRs $\Gamma_1$ to $\Gamma_{11}$ will all produce confidence {\em intervals} by construction. In addition, observe that only the CRs produced by $\Gamma_4$ and $\Gamma_6$ turned out to be symmetric about the sample median.

Our other illustration uses Proschan's famous Boeing air-conditioning data set \cite{Pro63}. Figure \ref{hist-ac data} 
\begin{figure}[h]
\caption{Histogram of Proschan's Boeing air-conditioning data set. The 212 observations are times between failures, and there were several airplanes that were monitored.}
\label{hist-ac data}
\includegraphics[width=\textwidth,height=2.25in,clip=]{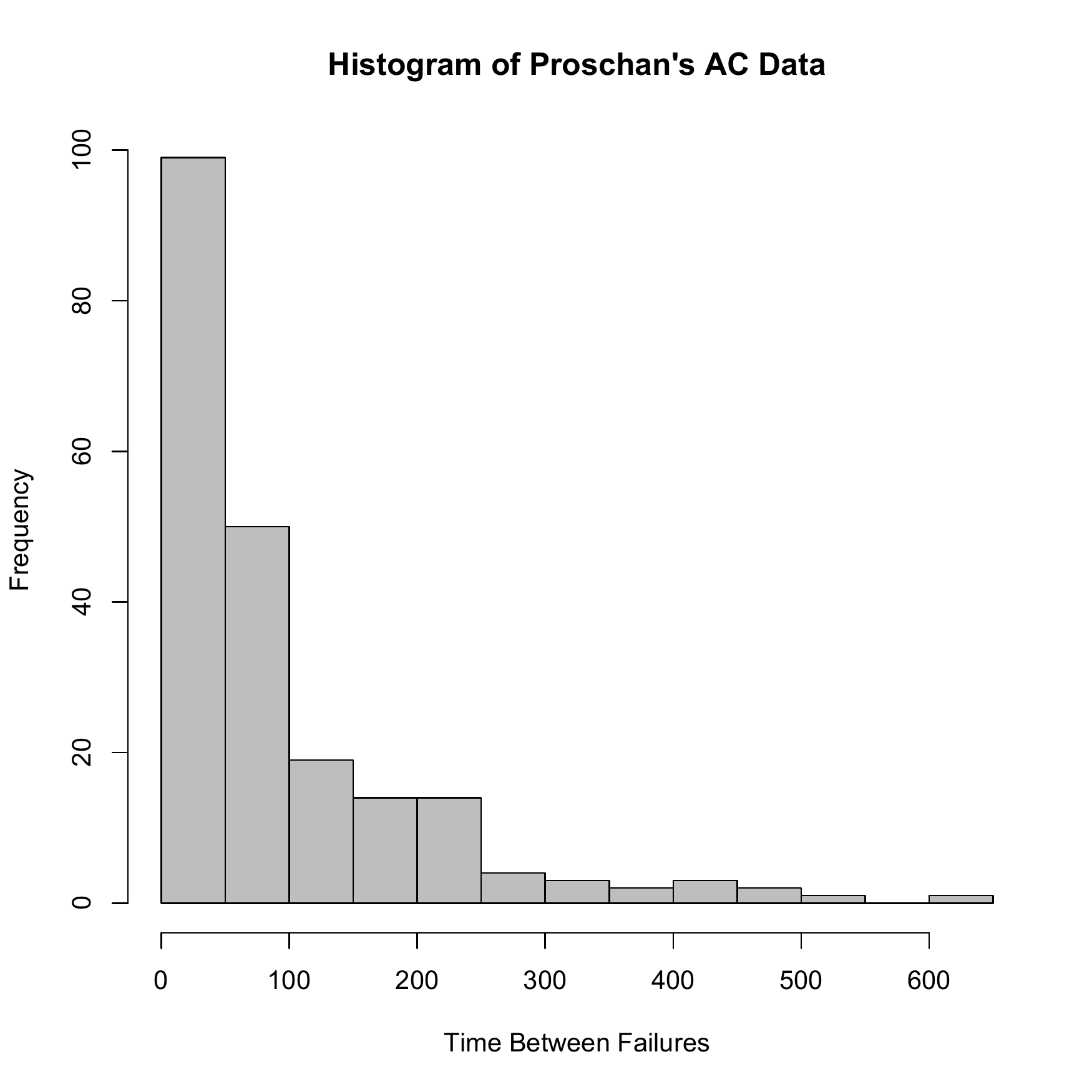}
\end{figure}
provides a histogram of the 212 observations in this data set, with each observation being a time between successive failures. To un-tie tied observations (we perturb the original values in tied sets of observations by uniform variates over $[-.001,.001]$. Note the high right-skewness of this data set. Proschan also demonstrated that this data set did not come from an exponential distribution, but came from a mixture of exponential distributions, inducing a decreasing failure rate property. Our goal in this illustration is to provide a confidence region for the median inter-failure time given this data set. Figure \ref{CRs for AC Data}
\begin{figure}[h]
\caption{The 95\% confidence regions for the median based on the thirteen methods for the Proschan's Boeing air-conditioning data set. The horizontal line represents the sample median.}
\label{CRs for AC Data}
\includegraphics[width=\textwidth,height=2.5in,clip=]{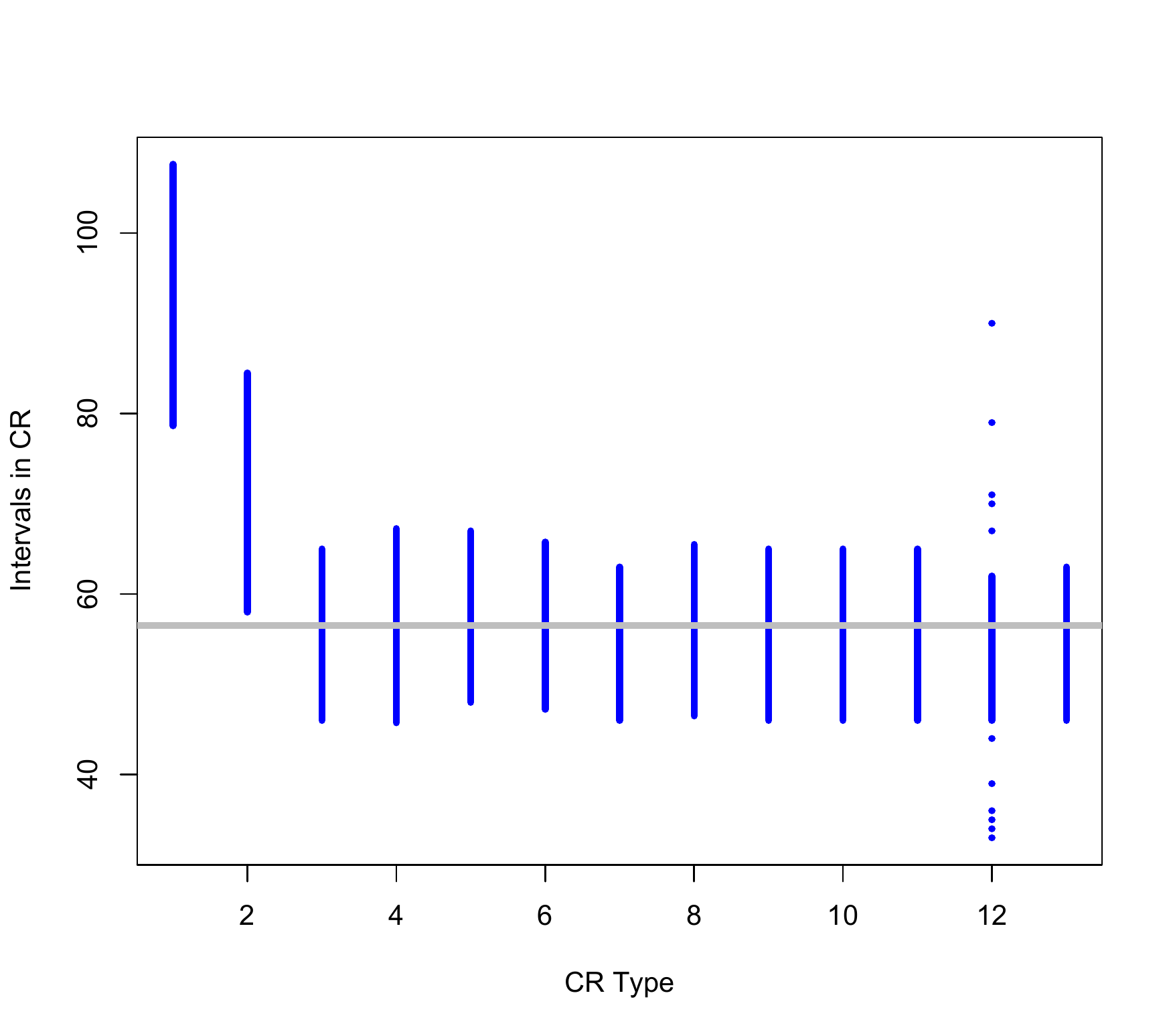}
\end{figure}
 presents the comparative plots of the CRs for the thirteen methods. Notice that the $T$-based CR, as well as the Wilcoxon signed-rank based CR, appear to be rather different from the other eleven CRs.  In fact, both excluded the sample median. At the same time, we are somewhat unfair in the manner in which we applied these two procedures since it would have been more appropriate to first perform a transformation to approximately symmetrize the distribution prior to applying these procedures, and then re-transforming back. Also, notice that the $\Gamma_{12}$ CR is composed of one big interval together with several smaller intervals. This could be attributed to the instability of the method-of-moments estimates of the $l(k;F)$'s. Interestingly, this CR has the smallest content, but we will see in the simulation studies in section \ref{sec-Simulation Comparisons of Some of the Methods} that its achieved coverage probability of the median tends to be lower than the nominal coverage probability.

\section{Comparison of Methods via Simulations}
\label{sec-Simulation Comparisons of Some of the Methods}

In this section we present the results of simulation studies to compare the thirteen CR methods listed in Table \ref{table-methods list} in terms of their contents and coverage probabilities under the NMEM.  Different error distributions [normal, Cauchy, uniform, logistic, gamma, Weibull and mixture of normals] and varied sample sizes ($n \in \{10,15,20,25,30,40,50\}$) were utilized in the simulations. The computer programs were coded in {\tt R} \cite{R}. For each combination of error distribution and sample size, 10000 simulation replications were ran, and with 2000 bootstrap replications for the bootstrap-based methods.

Figures \ref{fig-big-simuls} and \ref{fig-big-simuls-continued} present the results of this simulation study in the form of plots of the standardized mean contents (these are the simulated mean contents multiplied by the square root of the sample size) and the percentage coverage of the true median under different distributions and for the varied sample sizes. As we progress with the different distributions we eliminated in the plots the methods that were not competitive, either with respect to their achieved coverage probability or their mean content. This is in order to make the plots leaner and cleaner and to enable more clear-cut comparisons among those methods that are competitive. Methods that were eliminated from contention {\em after} a distribution run are as follows (they were still included in the runs but eliminated only in the plots): normal [5, 12 due to coverage]; Cauchy [1 due to content]; uniform [4 due to coverage]; logistic [none]; gamma [2 due to coverage]; Weibull [8 since almost the same as 9]; normal mixture [6 due to coverage]. Upon final examination of the resulting plots we also eliminated method 11 since it is unstable with respect to content. 

The first observation from these simulations concerns the performance of $\Gamma_1$ and $\Gamma_2$. Both perform very well under the normal distribution with $\Gamma_1$ having smaller content as expected, but $\Gamma_1$ performs extremely poorly with respect to content under the Cauchy distribution, and both do not perform well in terms of coverage for non-symmetric distributions. However, their poor performances under non-symmetric distributions are also expected. 
The next observation is the consistent validity of the sign-statistic based procedure $\Gamma_3$. Its coverage probability is always at least the specified coverage probability and in fact it tends to be conservative. At the same time, it has the highest mean content among the competitive methods. $\Gamma_{10}$, which is the procedure focused towards symmetric distributions and which is actually a randomized version of $\Gamma_3$, is preferable over $\Gamma_3$ since it has smaller mean content and at the same time it satisfies the coverage requirement. 
The basic bootstrap CR, $\Gamma_5$, surprisingly performed poorly with respect to its achieved coverage probability. The CR based on the studentized sample median using a bootstrapped standard deviation as estimate of the median's standard error, $\Gamma_6$, did not perform well for small sample sizes in terms of coverage probability. $\Gamma_8$ and $\Gamma_9$ almost have the same performance, as is to be expected from their derivations, while $\Gamma_{11}$ tended to have unstable mean contents. $\Gamma_{12}$ did not perform acceptably due to its degraded coverage probability. Thus, based on these simulation studies, the most competitive among the thirteen CR procedures are $\Gamma_3$, $\Gamma_{7}$, $\Gamma_{9}$, $\Gamma_{10}$, and $\Gamma_{13}$.

To focus comparisons among these five procedures, we ran more extensive simulations with 20000 replications and 5000 bootstrap replications for sample sizes $n \in \{10,15,20,25,30,40,50,75,100\}$ to obtain better comparisons of these five chosen methods. The results of these simulations are contained in Figures \ref{fig-big-simuls-chosen} and \ref{fig-big-simuls-chosen-continued}. 
Examining the plots in Figures \ref{fig-big-simuls-chosen} and \ref{fig-big-simuls-chosen-continued}, we see that the sign-based test, $\Gamma_3$, and the procedure focused on symmetric distributions, $\Gamma_{10}$, are consistently satisfying the confidence level requirement among all the distributions considered, including the skewed gamma and Weibull distributions, with $\Gamma_3$ tending to be more conservative. $\Gamma_{10}$, which as pointed out earlier is a randomized version of $\Gamma_{3}$, has confidence level that is very close to the pre-specified level, hence it also has smaller content compared to $\Gamma_3$. The PCT-Bootstrap, $\Gamma_7$, and the BCa-Bootstrap, $\Gamma_9$, tend to be liberal, and so is the adaptive $\Gamma_{13}$, though the latter could sometimes be conservative such when the distribution is Cauchy, or it could be more liberal than $\Gamma_7$ and $\Gamma_9$ such as for the mixture of normal distributions. Content-wise, $\Gamma_7$, $\Gamma_9$, and $\Gamma_{13}$ almost have the same performances, with $\Gamma_9$ appearing to be having a tad smaller content. Due to the lower achieved confidence, these three CRs also tended to have smaller contents compared to $\Gamma_3$ and $\Gamma_{10}$, with $\Gamma_3$ possessing the largest content and being also the most conservative. We note here the impact of the plug-in procedure for $\Gamma_{13}$. Since we did not know $l(k;F)$ we plugged-in an estimator for this function which utilized the empirical distribution function and which also used the same sample data, so there is data double-dipping. The impact of this plug-in and double-dipping procedure is to make the procedure somewhat liberal. In contrast, the $\Gamma_{10}$ CR did not need this plug-in procedure and we observe that the achieved level for this CR is very close to the pre-specified confidence level.

\section{Concluding Remarks}
\label{sec-conclusion}

In this paper we revisit the problem of constructing confidence regions (CRs) for the median under the NMEM. The problem is also relevant in the study of complex engineering systems where it is difficult to determine the exact functional form of the system's lifetime distribution, so one is forced to utilize a nonparametric model for the system's lifetime distribution. In addition, this problem also arises in economic settings where of interest will be the median instead of the mean when dealing with the income variable due to high right-skewness of its distribution. Several existing nonparametric approaches to constructing CRs for the median were reviewed.  Among these `off-the-shelf' methods are the $T$-based method which utilizes the sample mean, and that based on the Wilcoxon signed-rank statistic. These two methods may perhaps be the default methods for many users whose goal is to infer about the `center' of the population distribution. Also included are the computationally-intensive CR methods such as the bootstrap percentile and the BCa approaches (\cite{EfrHas16}). 

When assessing the adequacy of a CR, there is a need to examine its expected content (in one-dimensional settings, content is its Lebesgue measure) in addition to the requirement that it satisfies a specified nominal level of coverage of the true parameter value. Thus we  examined the problem of obtaining optimal CRs, which are those with smallest expected contents. Under the NMEM, invariance with respect to location-shifts and monotone transformations were invoked to reduce the problem to constructing equivariant CRs. Within the class of equivariant CRs, we obtained the best CRs, with respect to minimizing the expected content for subclasses of the class of all continuous distribution functions. Thus, there is a best CR when focused on the subclass of symmetric distributions and a best CR for the subclass of exponential distributions. We also developed fully nonparametric data-adaptive CRs under the NMEM. These adaptive CRs are the ones labeled $\Gamma_{12}$ and $\Gamma_{13}$. Based on simulation studies to compare the different CRs, we found that under the NMEM, both the $T$-based and the Wilcoxon signed-rank statistic based CRs should {\bf not} be utilized. The sign-statistic based procedure, $\Gamma_3$, and the optimal procedure focused towards symmetric distributions, $\Gamma_{10}$, both fulfill the confidence level requirement, with $\Gamma_3$ being somewhat conservative, hence tended to have larger content. Among the other CR methods, the bootstrap procedures $\Gamma_7$ (PCT) and $\Gamma_9$ (BCa), and the adaptive procedure $\Gamma_{13}$, were the most competitive among all scenarios, but these three tended to be a tad more liberal than either $\Gamma_3$ and $\Gamma_{10}$, hence also possessed smaller contents. If one is to insist that the desired confidence level should be achieved, then $\Gamma_{10}$ appears to be the best, or if one is opposed to using randomized CRs, then $\Gamma_3$ should be chosen. However, we feel that abhorrence to the use of randomized procedures is not mathematically justifiable nor defensible, since the use of randomized procedures allows for better methods and such strategies are in fact the bulwark of statistical decision and game theory. On the other hand, if one could tolerate a small degradation in the achieved confidence level, then $\Gamma_9$ and $\Gamma_{13}$ appear to be reasonable choices among these different CR procedures.

Finally, we mention that the approach of developing the CR procedures using invariance considerations is extendable to other more complex settings, such as two-sample settings, $K$-sample settings, situations with censored and/or truncated data, and when the parameter of interest is not necessarily the median. In addition, of particular and major interest to us leading to this work is the potential that the use of invariance arguments may enable the construction of optimal {\em multiple} confidence regions, extending results in multiple testing settings. We hope to explore these extensions and new possibilities in future work.

\section*{Acknowledgements}

The authors thank Professor James Lynch, Dr.\ Shiwen Shen, and graduate students Tahmidul Islam, Jeff Thompson, Lili Tong and Lu Wang for helpful discussions during research seminars. In addition, we thank Professors Laurent Doyen, Olivier Gaudoin  and {\'E}ric  Marchand for helpful comments during the first author's seminar presentation of this work at the {\'U}niversite Grenoble Alpes in Grenoble, France. We thank also
Professor Xianzheng Huang of the University of South Carolina for helpful comments. 

\section*{Funding}

Partially supported by NIH Grant P30GM103336-01A1. 

%


\baselineskip=12pt
\bibliography{ConfidenceRegions}



\newcommand{\myheight}{1.4in}

\begin{figure}[h]
\caption{Plots of the results of simulation studies with 10000 replications over a set of sample sizes ($n \in \{10,15, 20, 25, 30, 40, 50\}$) for normal, Cauchy, uniform, logistic, gamma(2,1), Weibull(.5,1), and mixture of normals error distributions. The plots pertaining to the lengths (contents) utilized the {\em standardized mean length}, which is the mean length multiplied by $\sqrt{n}$. The description of the 13 CR methods are in Table \ref{table-methods list}.}
\label{fig-big-simuls}
\begin{tabular}{c} \hline \\
{\bf Distribution:} Normal(0,1) \\
\includegraphics[width=\textwidth,height=\myheight,clip=]{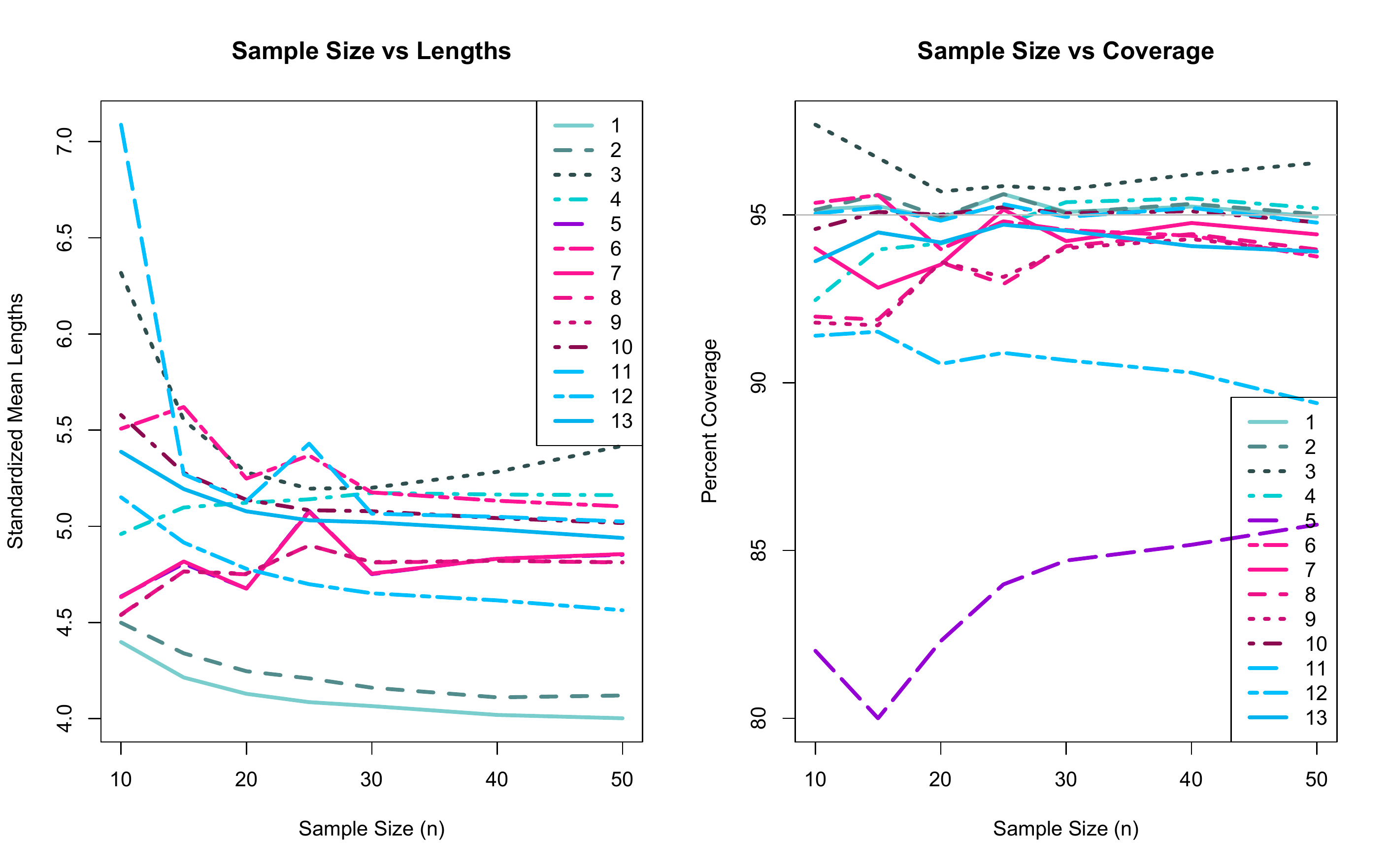} \\ \hline \\
{\bf Distribution:} Cauchy(0,1) \\
\includegraphics[width=\textwidth,height=\myheight,clip=]{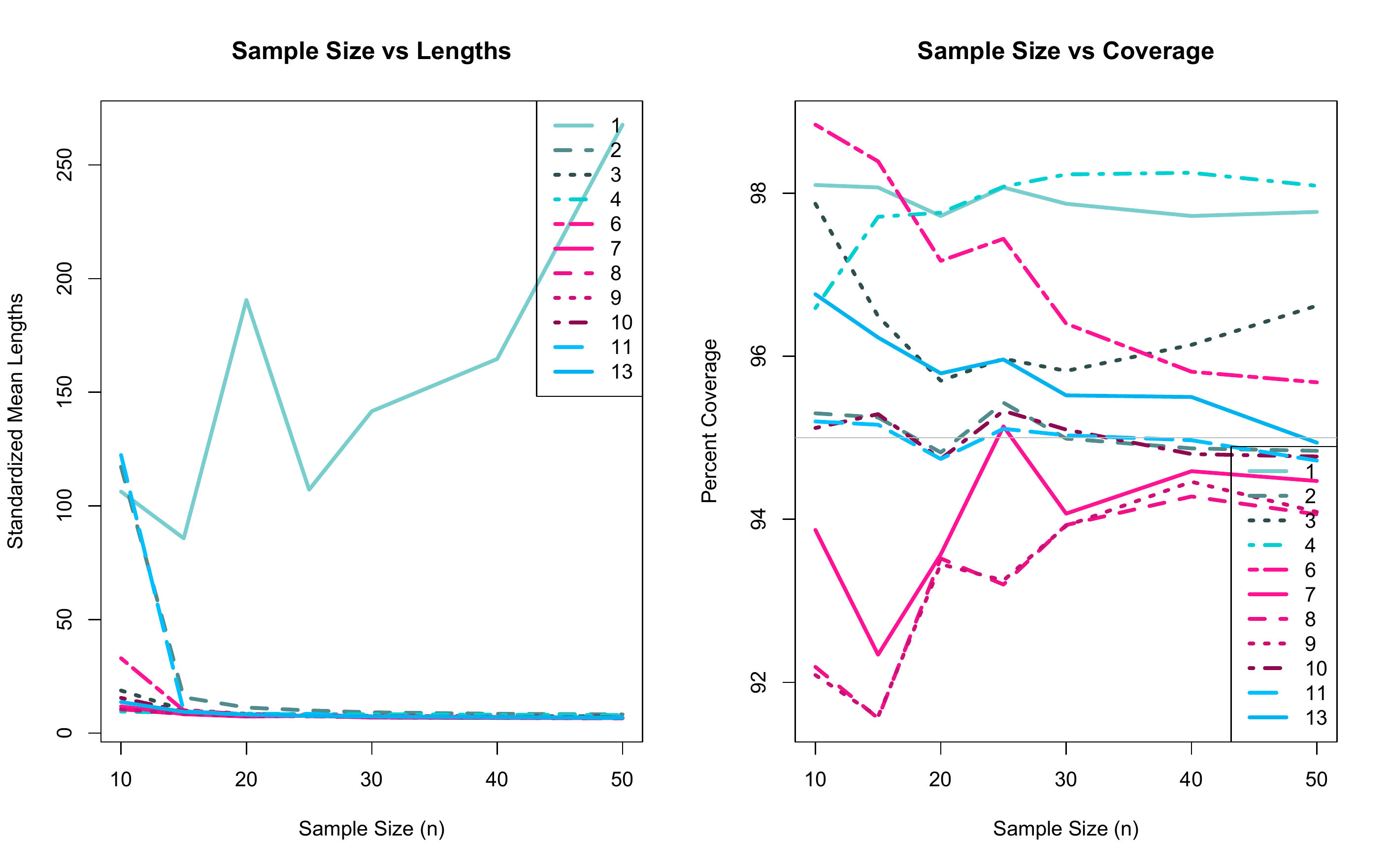} \\ \hline \\
{\bf Distribution:} Uniform[-1,1] \\
\includegraphics[width=\textwidth,height=\myheight,clip=]{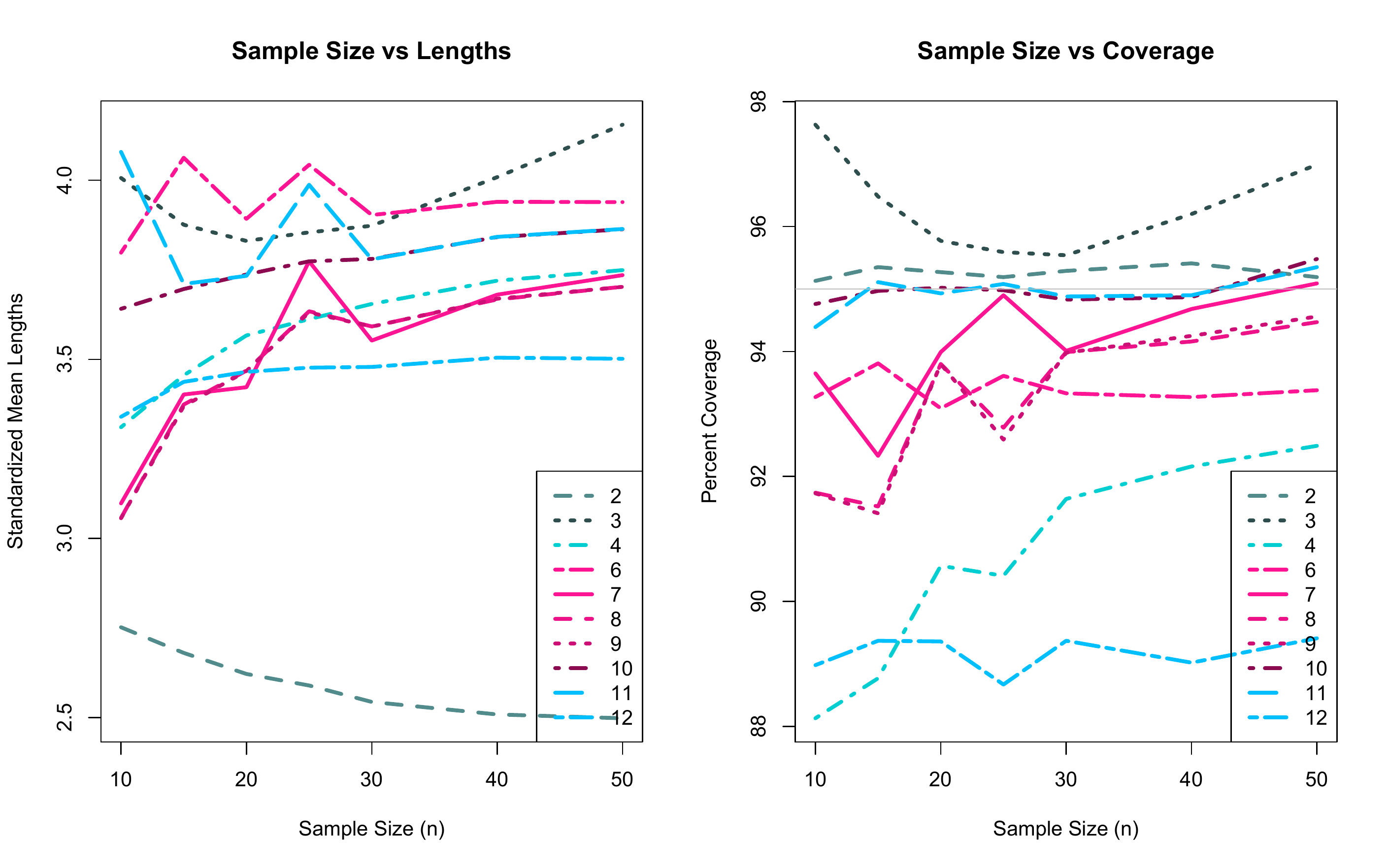} \\ \hline
\end{tabular}
\end{figure}

\begin{figure*}[h]
\caption{Simulation results ... continued}
\label{fig-big-simuls-continued}
\begin{tabular}{c} \hline \\
{\bf Distribution:} Logistic(0,1) \\
\includegraphics[width=\textwidth,height=\myheight,clip=]{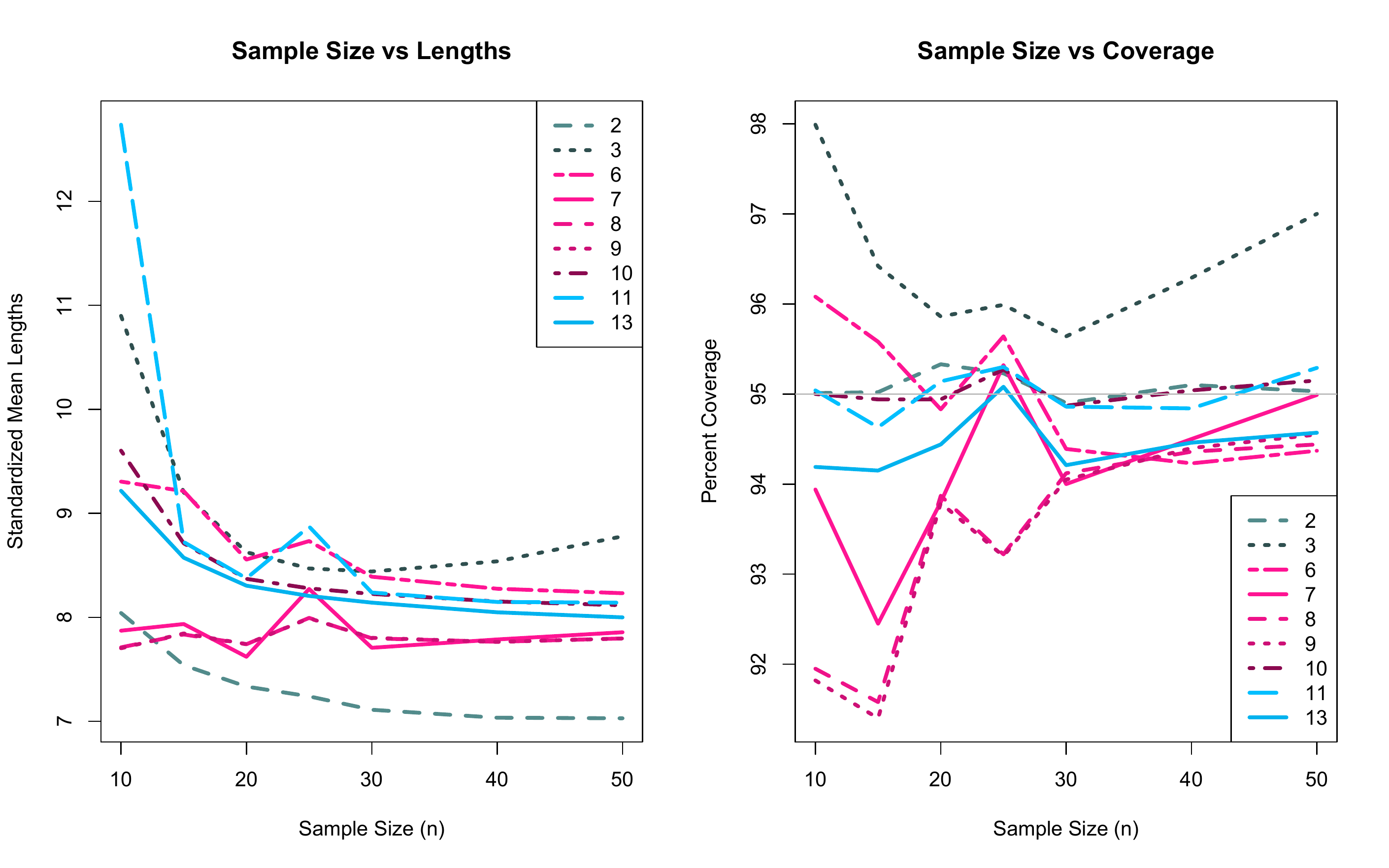} \\ \hline \\
{\bf Distribution:} Gamma(2,1) \\
\includegraphics[width=\textwidth,height=\myheight,clip=]{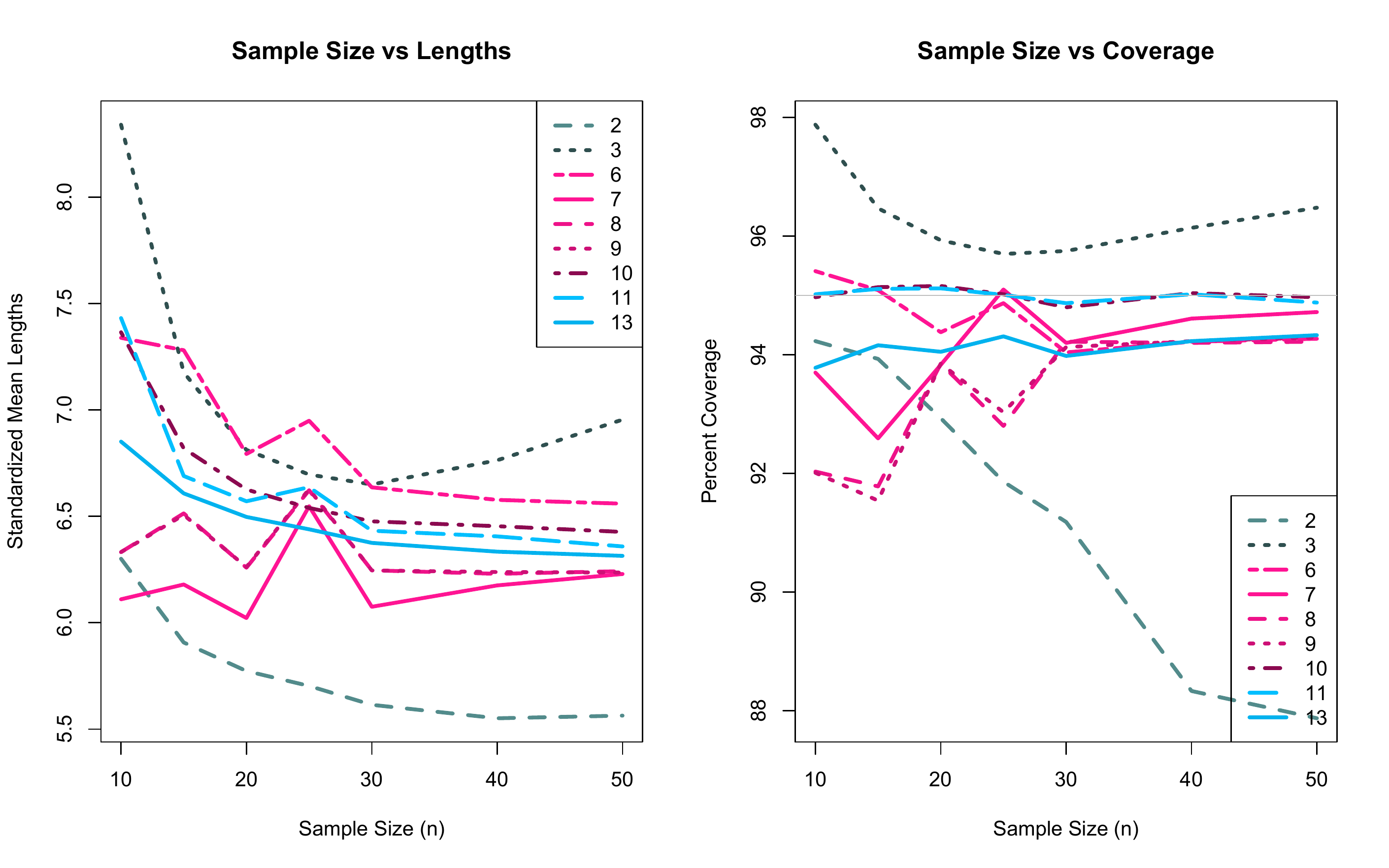} \\ \hline \\
{\bf Distribution:} Weibull(.5,1) \\
\includegraphics[width=\textwidth,height=\myheight,clip=]{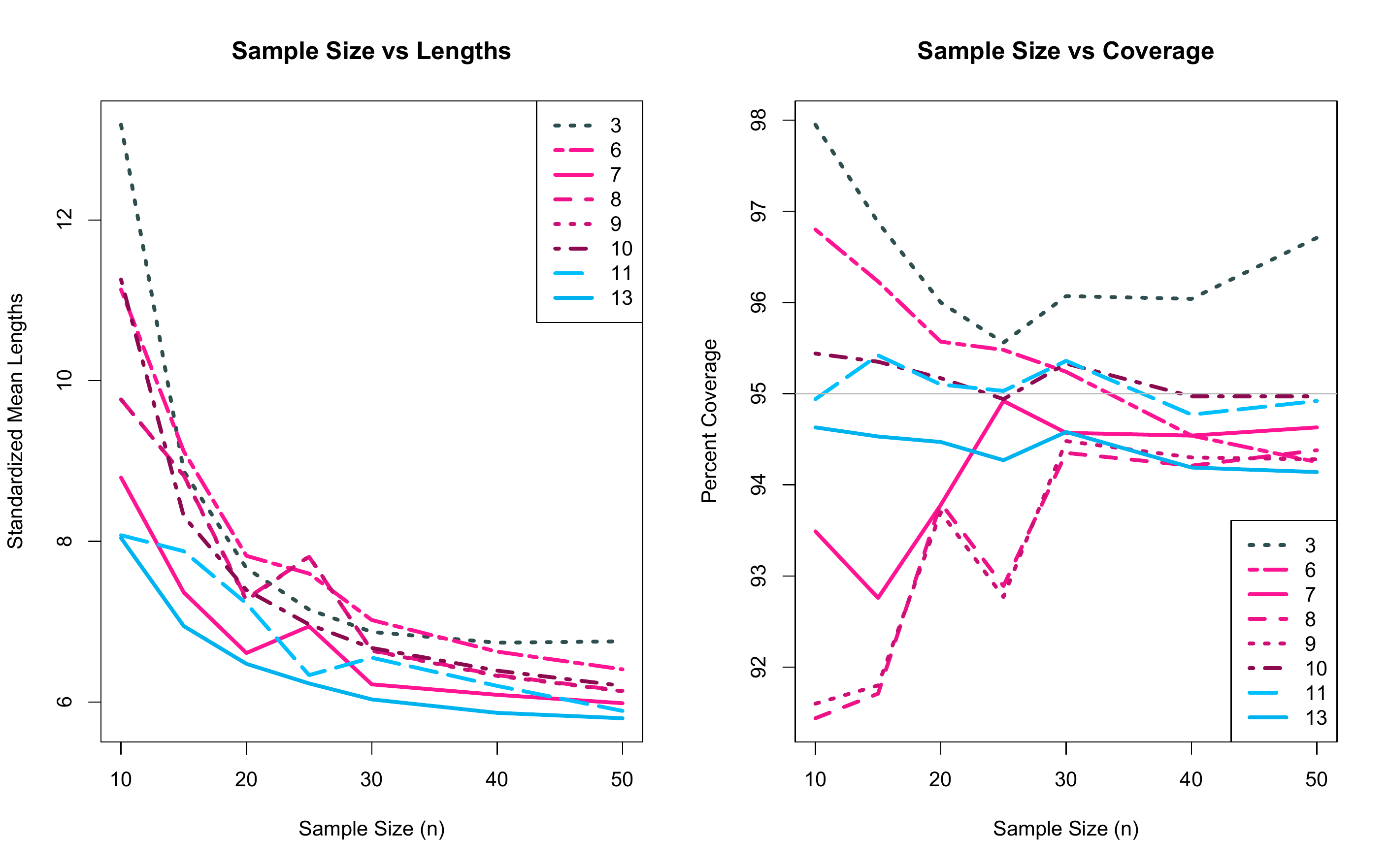} \\ \hline \\
{\bf Distribution:} .6*Normal(-5,3) + .4*Normal(5,2) \\
\includegraphics[width=\textwidth,height=\myheight,clip=]{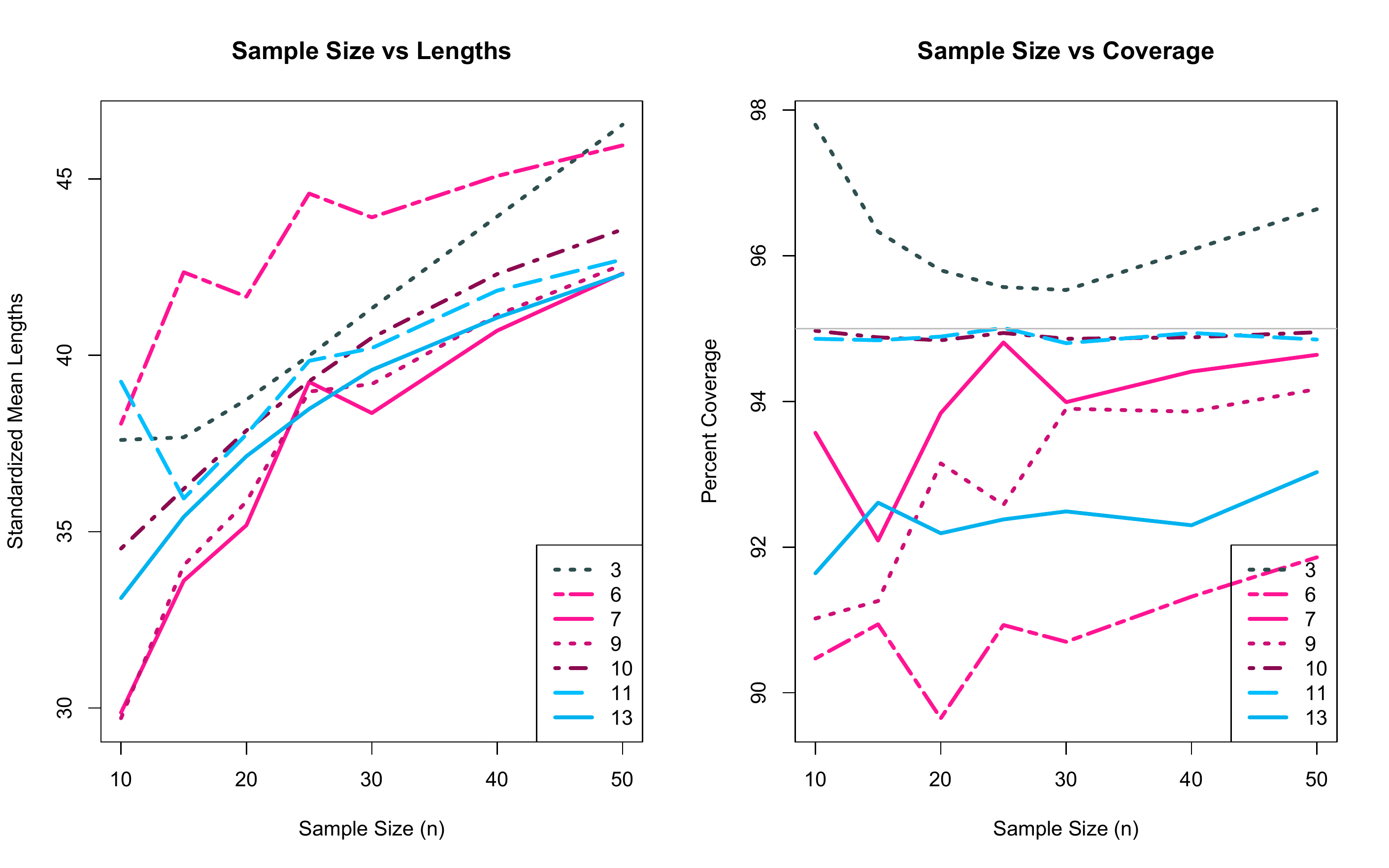} \\ \hline
\end{tabular}
\end{figure*}

\begin{figure}[h]
\caption{Plots of the results for the five final chosen methods with 20000 replications and BREPS=5000 over a set of sample sizes ($n \in \{10,15, 20, 25, 30, 40, 50,75,100\}$) for normal, Cauchy, uniform, logistic, gamma(2,1), Weibull(.5,1), and mixture of normals error distributions. The plots pertaining to the lengths (contents) utilized the {\em standardized mean length}, which is the mean length multiplied by $\sqrt{n}$. The description of these five CR methods are in Table \ref{table-methods list}.}
\label{fig-big-simuls-chosen}
\begin{tabular}{c} \hline \\
{\bf Distribution:} Normal(0,1) \\
\includegraphics[width=\textwidth,height=\myheight,clip=]{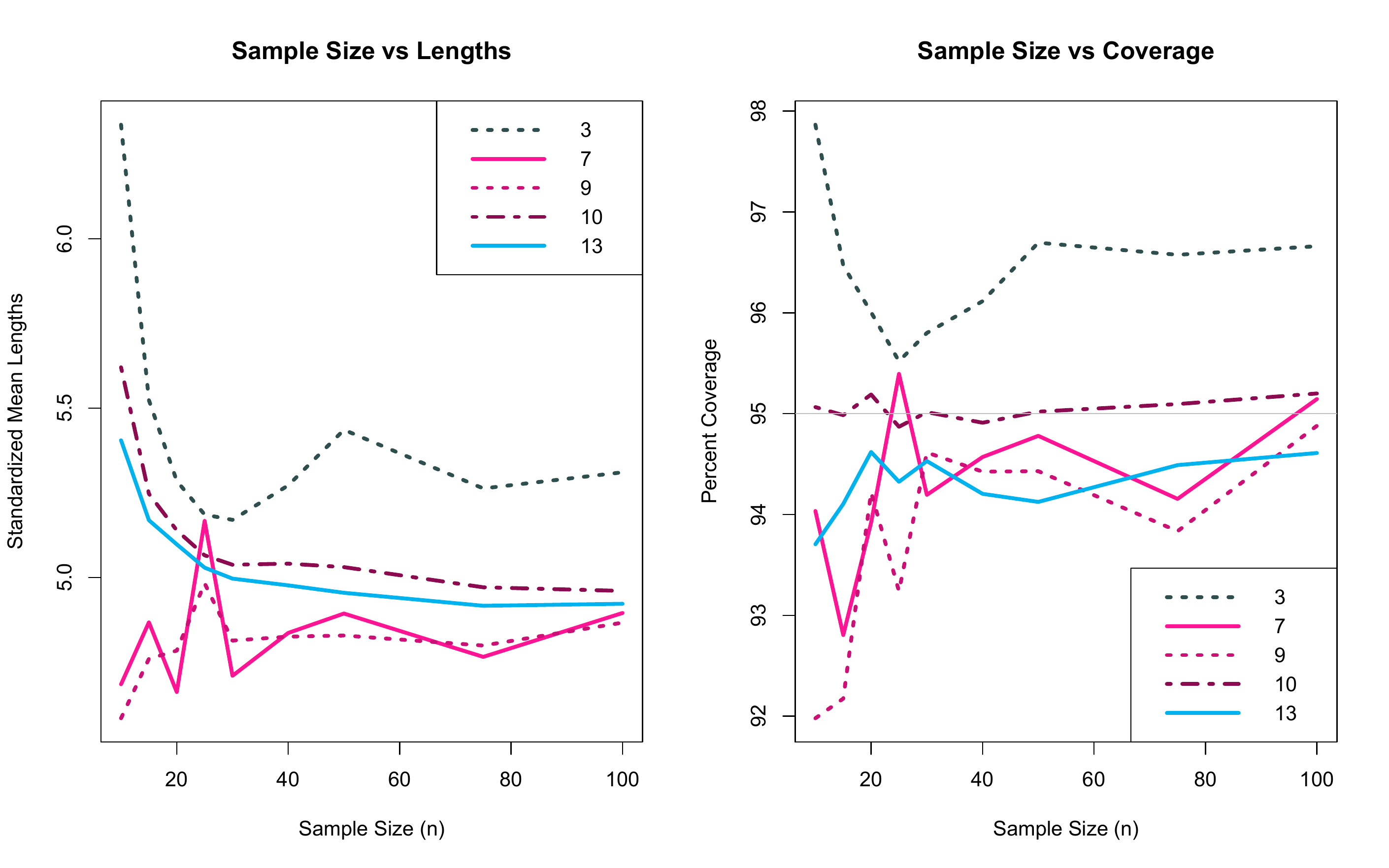} \\ \hline \\
{\bf Distribution:} Cauchy(0,1) \\
\includegraphics[width=\textwidth,height=\myheight,clip=]{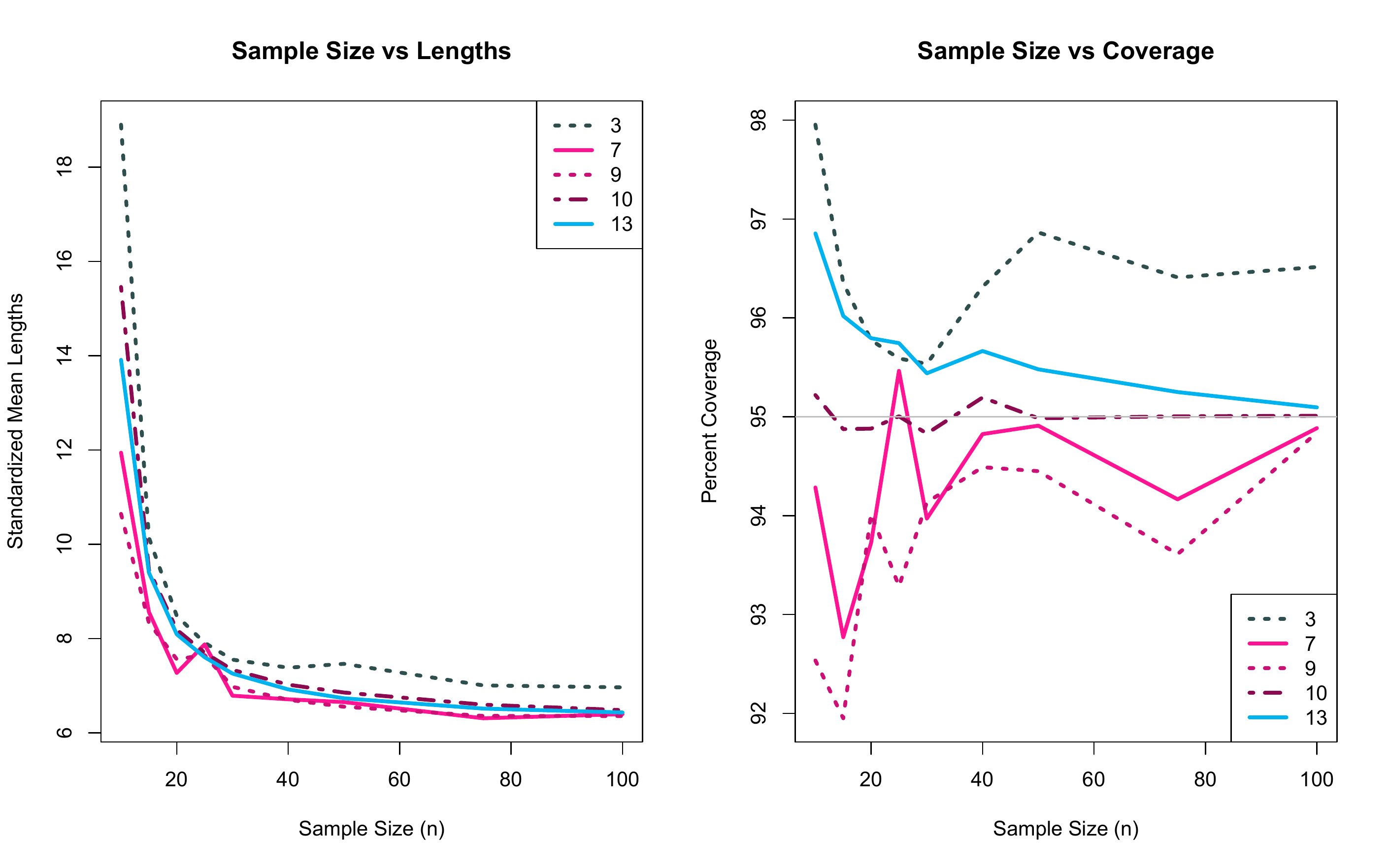} \\ \hline \\
{\bf Distribution:} Uniform[-1,1] \\
\includegraphics[width=\textwidth,height=\myheight,clip=]{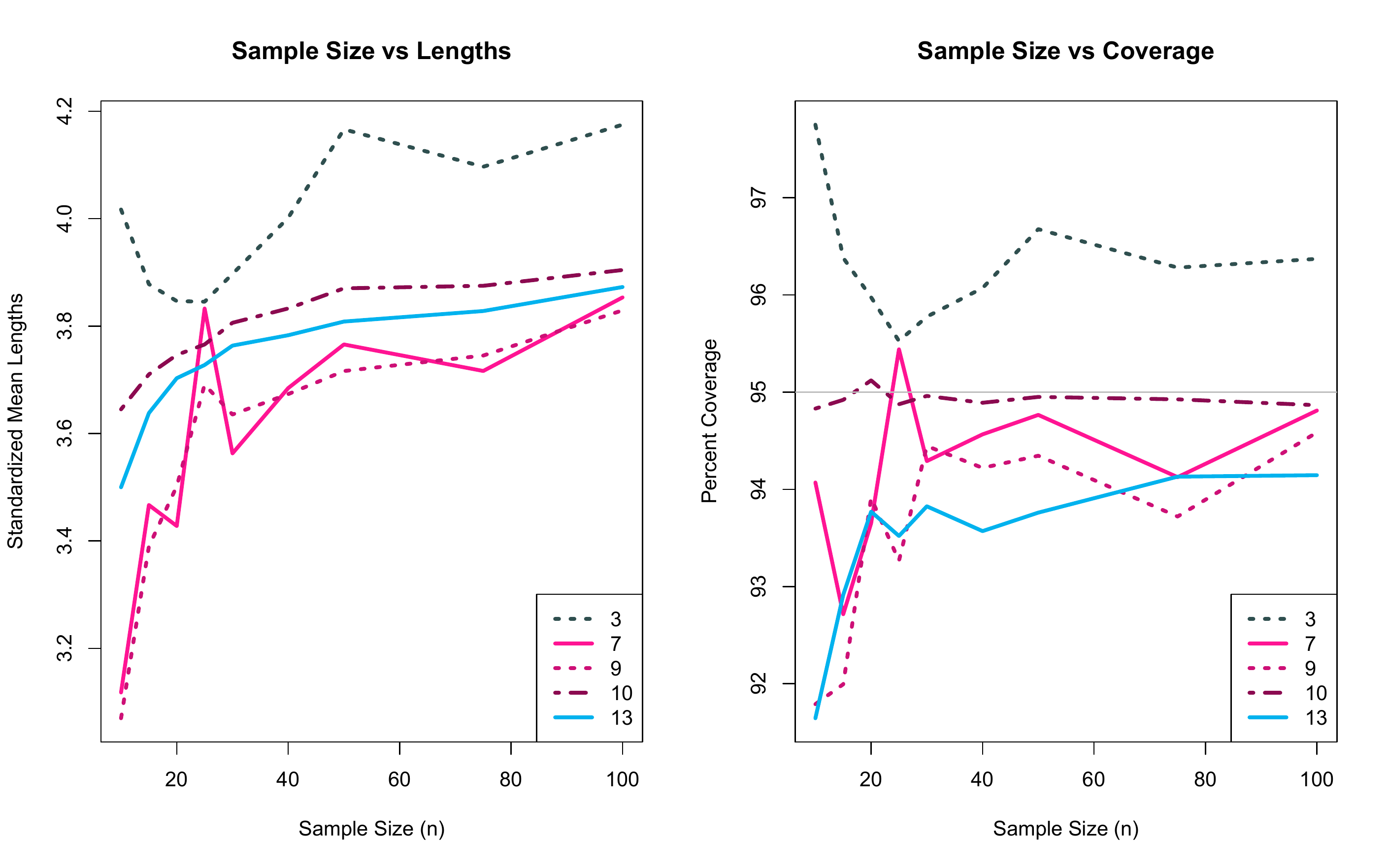} \\ \hline
\end{tabular}
\end{figure}

\begin{figure*}[h]
\caption{Simulation results for chosen methods ... continued}
\label{fig-big-simuls-chosen-continued}
\begin{tabular}{c} \hline \\
{\bf Distribution:} Logistic(0,1) \\
\includegraphics[width=\textwidth,height=\myheight,clip=]{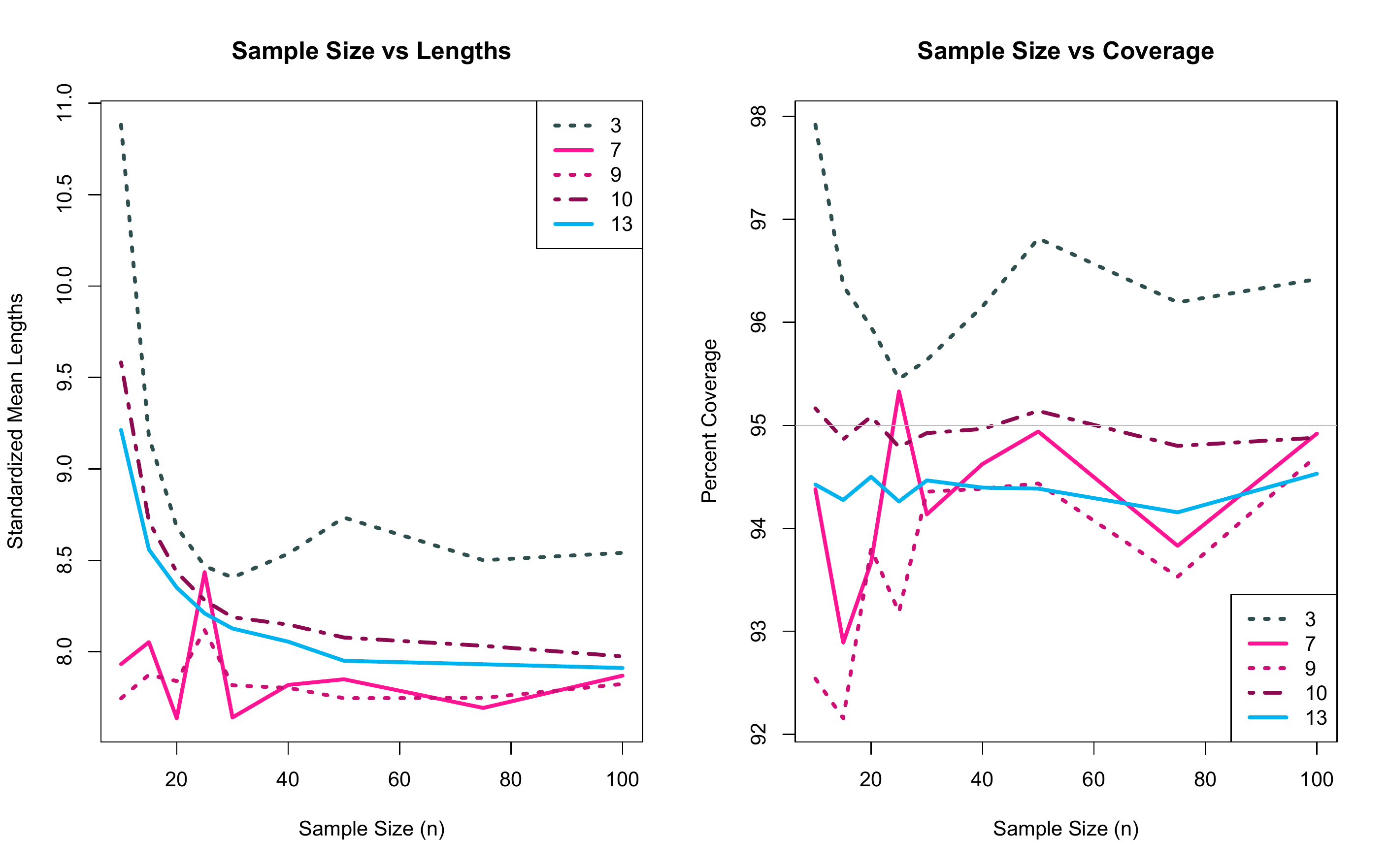} \\ \hline \\
{\bf Distribution:} Gamma(2,1) \\
\includegraphics[width=\textwidth,height=\myheight,clip=]{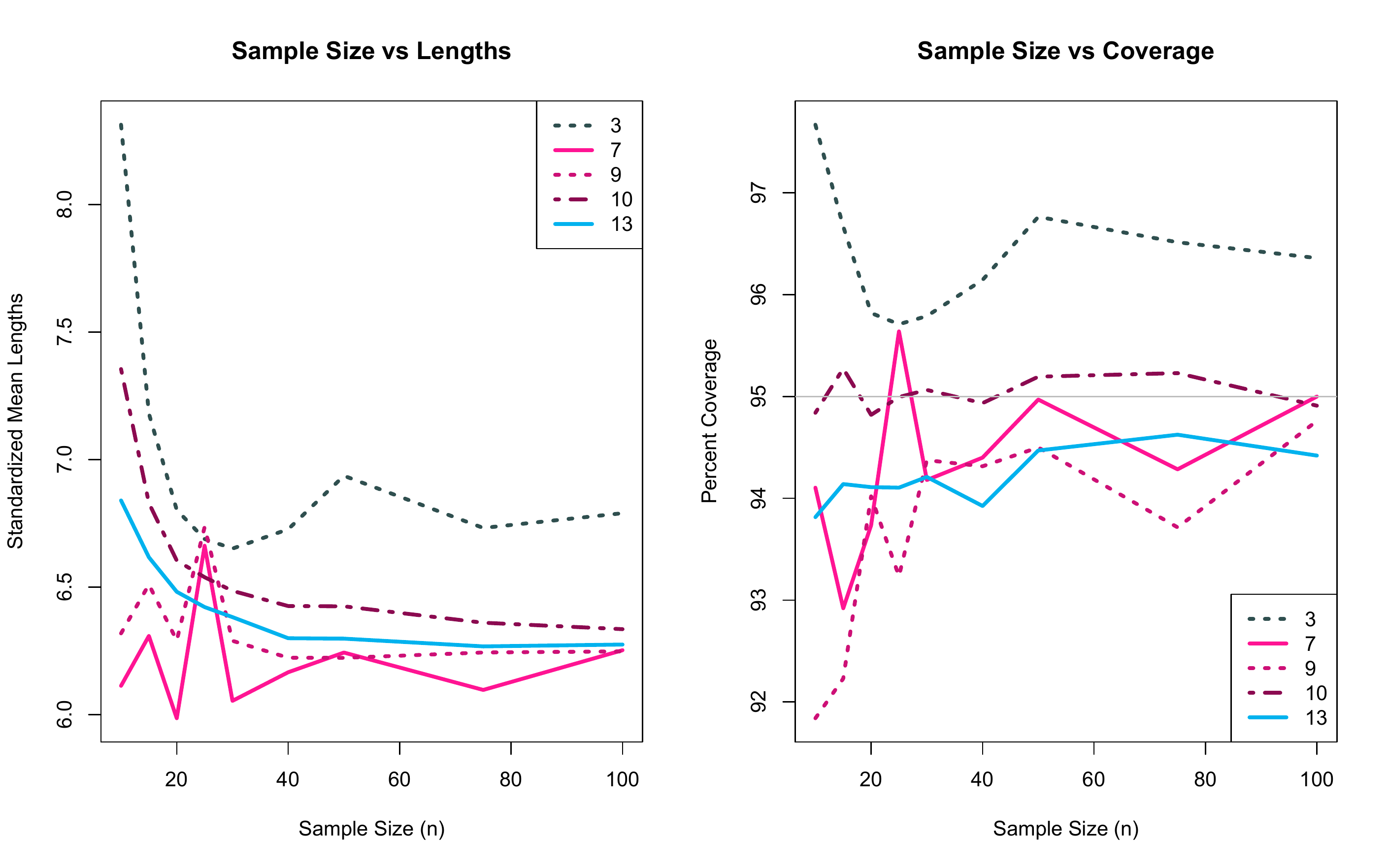} \\ \hline \\
{\bf Distribution:} Weibull(.5,1) \\
\includegraphics[width=\textwidth,height=\myheight,clip=]{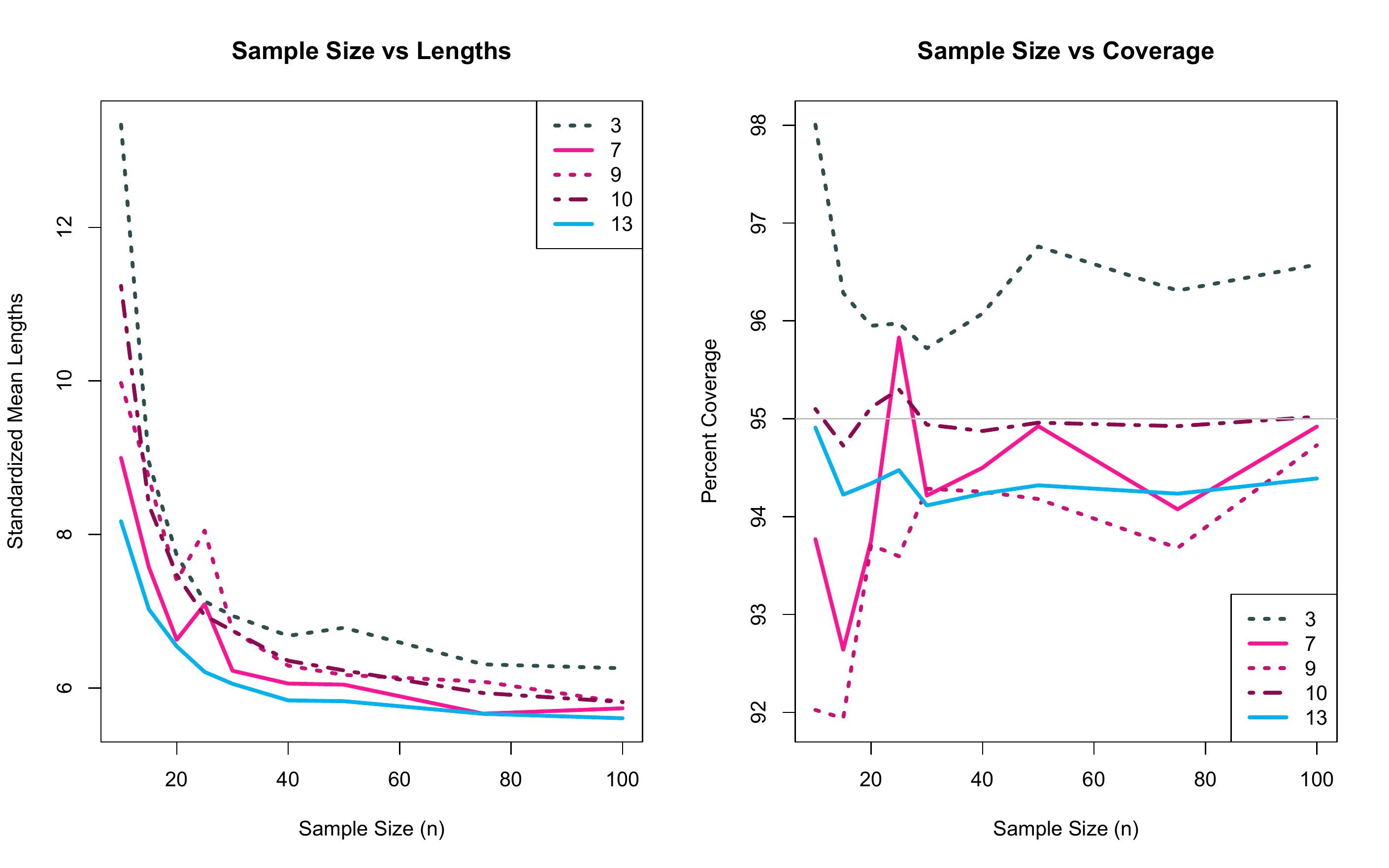} \\ \hline \\
{\bf Distribution:} .6*Normal(-5,3) + .4*Normal(5,2) \\
\includegraphics[width=\textwidth,height=\myheight,clip=]{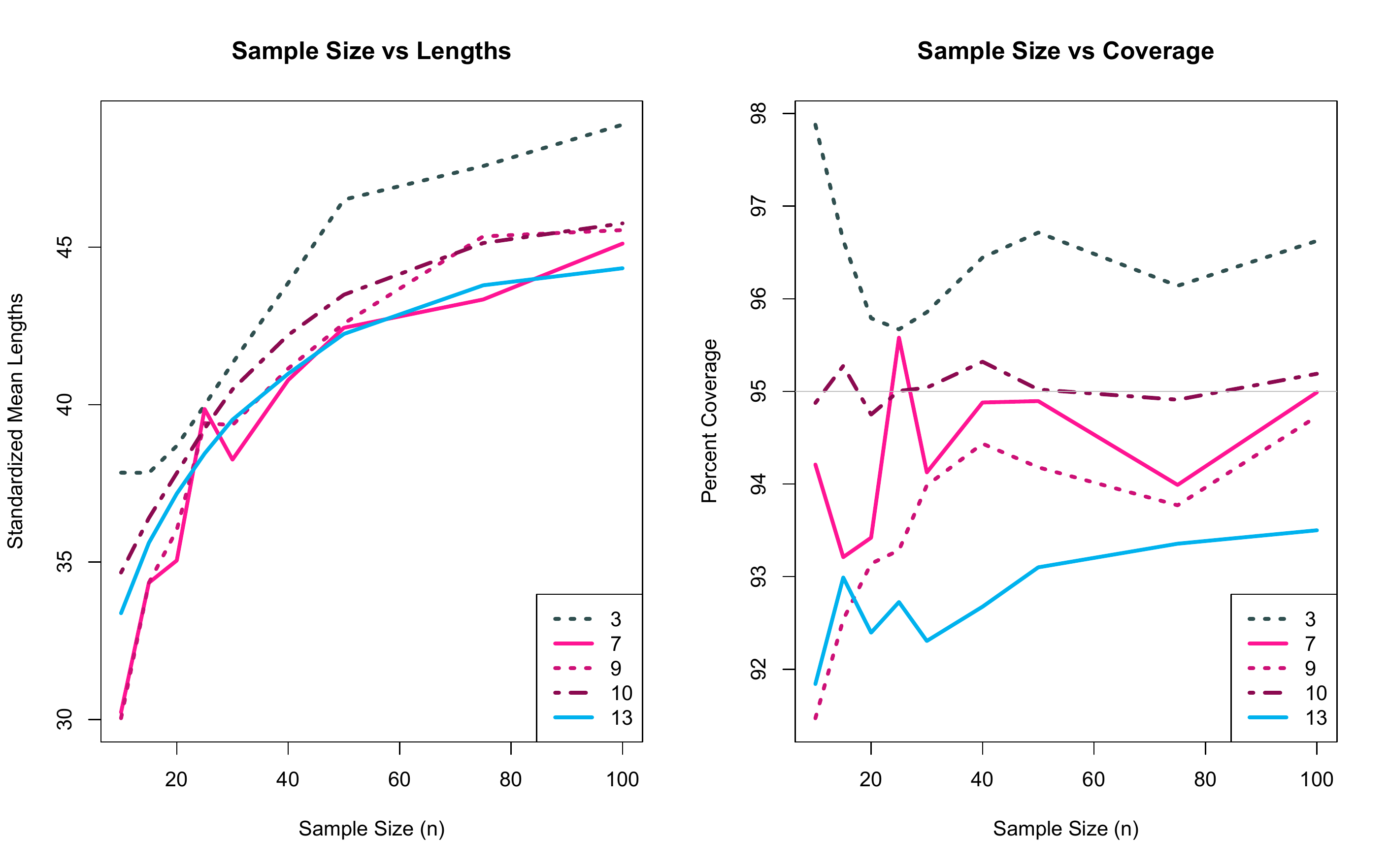} \\ \hline
\end{tabular}
\end{figure*}

\end{document}